\documentclass[12pt, ams fonts]{amsart}

\setlength{\evensidemargin}{-0.1in} \setlength{\oddsidemargin}{-0.1in}
\setlength{\textwidth}{6.7in} \setlength{\topmargin}{0in}
\setlength{\textheight}{8.8in}

\textheight 8.64 truein

\usepackage{amsmath}
\usepackage{amsmath,amstext,amssymb,amsthm,amsfonts}
\usepackage{xcolor}

\usepackage[all]{xy}
\usepackage{amscd}

\newtheorem{theorem}{Theorem}[section]
\newtheorem{lemma}[theorem]{Lemma}
\newtheorem{corollary}[theorem]{Corollary}
\newtheorem{proposition}[theorem]{Proposition}
\newtheorem{example}[theorem]{Example}
\newtheorem{remark}[theorem]{Remark}

\newcommand{\tr}{\operatorname{tr}}

\input xy
\xyoption{all}

\begin{document}
\title{Mixed tensor products and Capelli-type determinants}

\author{Dimitar Grantcharov} 
\address{Department of Mathematics \\ University of Texas at Arlington \\ Arlington, TX 76019} \email{grandim@uta.edu}

\author{Luke Robitaille} 

\begin{abstract}
In this paper we study  properties of a homomorphism $\rho$ from the universal enveloping algebra $U=U(\mathfrak{gl}(n+1))$ to a tensor product of an algebra $\mathcal D'(n)$ of differential operators and $U(\mathfrak{gl}(n))$. We  find a formula for the image of the Capelli determinant of $\mathfrak{gl}(n+1)$ under $\rho$, and, in particular, of the images under $\rho$ of the Gelfand generators of the center $Z(\mathfrak{gl}(n+1))$ of $U$. This formula is proven by relating $\rho$ to the corresponding Harish-Chandra isomorphisms, and, alternatively, by using a purely computational approach.  Furthermore, we define a homomorphism from  $\mathcal D'(n) \otimes U(\mathfrak{gl}(n))$ to an algebra containing  $U$ as a subalgebra, so that $\sigma (\rho (u))  - u \in G_1 U$, for all $u \in U$, where $G_1 = \sum_{i=0}^{n} E_{ii}$. \\

\noindent Keywords and phrases: Lie algebra, Capelli identities, Capelli determinants, tensor modules.\\

\noindent MSC 2010: 17B10, 17B35
\end{abstract}
\maketitle

\section{Introduction}
Capelli-type determinants are a powerful tool in invariant theory. One fundamental result is that the Capelli determinant $C_N(T)$ corresponding (up to a diagonal shift) to the $N \times N$ matrix ${\bf E}$ whose $(i,j)$th entry is the elementary matrix $E_{ij}$ is a polynomial in $T$ whose coefficients $C_k$ are central elements in the universal enveloping algebra $U(\mathfrak{gl}(N))$ of the Lie algebra $\mathfrak{gl}(N)$. Moreover, the coefficients $C_k$  are generators of the center $Z(\mathfrak{gl}(N))$ of $U(\mathfrak{gl}(N))$, and are usually referred as Capelli generators. On the other hand the elements $G_k = \tr ({\bf E}^k)$ for $k=1, \dots ,N$ also form a system of generators of  $Z(\mathfrak{gl}(N))$, sometimes known as the Gelfand generators. There is a nice transition formula between the Gelfand generators and the Capelli generators, \cite{U}, and this formula can be considered as a noncommutative version of the Newton identities. The applications of Capelli-type determinants extend well beyond 
relations and properties of elements in  $Z(\mathfrak{gl}(N))$. There are direct applications to classical invariant theory (see for example \cite{H}), and a more general treatment of the subject using  the theory of Yangians (see \cite{M} and the references therein). 

In this paper we relate Capelli-type determinants to another classical construction in representation theory - the mixed tensor type modules. These modules are modules over tensor products of an algebra of differential operators and a universal enveloping algebra  $U(\mathfrak{gl}(n))$. Modules of mixed tensor type, also known as tensor modules, or modules of Shen and Larson (see \cite{Sh} and \cite{L}),  can be defined over the Lie algebras $\mathfrak{sl}(n+1)$ after considering a suitable homomorphism. In this paper we define a $\mathfrak{gl}(n+1)$-version of this homomorphism, namely a map $\rho :U(\mathfrak{gl}(n+1)) \to \mathcal D' (n) \otimes U(\mathfrak{gl}(n))$, where $\mathcal D' (n)$ is the algebra of polynomial differential operators on $\mathbb C [t_0^{\pm 1}, t_1, \dots ,t_n]$ generated by $t_i/t_0$, $t_0 \partial_j$, $i>0$, $j \geq 0$. One of the main results of the paper is an explicit description of the image of the Capelli determinant $C_{n+1}(T)$ of $\mathfrak{gl}(n+1)$ under $\rho$. It turns out that the result is especially pleasant: up to a shift, $\rho (C_{n+1}(T))$ is a product of $C_{n}(T)$ and a linear factor. This leads to  explicit formulas for the images $\rho (G_k^{\mathfrak{gl}(n+1)})$ of the Gelfand generators  $G_k^{\mathfrak{gl}(n+1)}$ for $\mathfrak{gl}(n+1)$ under $\rho$. As a corollary, we also obtain the mixed tensor version of the noncommutative Cayley-Hamilton identities; see Corollary \ref{cor-ch}. To prove the formula for $\rho (C_{n+1}(T))$ we relate $\rho$ to the Harish-Chandra isomorphisms of $\mathfrak{gl}(n)$ and $\mathfrak{gl}(n+1)$. We expect that the explicit formulas for $\rho (C_{n+1}(T))$ and $\rho (G_k^{\mathfrak{gl}(n+1)})$ to have applications in the representation theory of the tensor modules, as one can easily and explicitly compute central characters.

Another result of the paper is finding a pseudo left inverse $\sigma$ of  $\rho$ in the following sense. The homomorphism $\sigma$ maps  $\mathcal D' (n) \otimes U(\mathfrak{gl}(n))$ to an algebra $U''$ that contains $U = U(\mathfrak{gl}(n+1))$ as a subalgebra is such that $\sigma (\rho (u)) - u \in G_1 U$ for every $u\in U$, where $G_1 = G_1^{\mathfrak{gl}(n+1)}= \sum_{i=0}^{n}E_{ii}$. In particular, we obtain that the kernel of $\rho$ is the ideal $(G_1)$ in $U$, while the kernel of the $\mathfrak{sl}(n+1)$ version of $\rho$ is trivial.

It is worth noting that the results concerning the images of the generators of $ Z(\mathfrak{gl}(n+1))$ under $\rho$ can be obtained with long and  direct computation. For this (alternative) approach we prove some more general identities that are included in the Appendix of this paper. We believe that these identities may be of independent interest. 

The organization of the paper is as follows. In Section 3 we define the homomorphism $\rho$. In Section 4 we relate $\rho$ with the corresponding Harish-Chandra isomoprhisms. The results of Section 4 are applied in the next section where the image of the Capelli-type determinant under $\rho$ is computed. Using the latter results we derive the formulas for the images of the Gelfand generators under $\rho$.  In Section 6 we define a pseudo-left inverse of $\rho$ and find the kernel of $\rho$. The Appendix contains some explicit formulas for the images under $\rho$ of a set of homogeneous elements of $U(\mathfrak{gl}(n+1))$. All the Gelfand generators for $\mathfrak{gl}(n+1)$ can be written as sums of elements from this set, so we obtain an alternative proof of the results in Sections 4 and 5.

\medskip
\noindent \textbf{Acknowledgements:} The first author is partly supported by  Simons Collaboration Grant 358245. He also would like to thank the Max Planck Institute in Bonn (where part of this work was completed) for the excellent working conditions. The second author thanks his family for their support. We thank Vera Serganova for the useful discussion.

\section{Notation and conventions}
Our base field is $\mathbb C$. All vector spaces, tensor products, and associative algebras will be considered over $\mathbb C$ unless otherwise specified. By $\delta_{kl}$  we denote the Kronecker delta function, which equals $1$ if $k=l$ and $0$ otherwise. Throughout the paper we fix a positive integer $n$.

For a Lie algebra $\mathfrak a$, by $U(\mathfrak a)$ we denote the universal enveloping algebra of $\mathfrak a$ and by $Z(\mathfrak a)$ the center of $U(\mathfrak a)$. By $\mathfrak{gl} (N)$ (respectively, $\mathfrak{sl} (N)$) we denote the Lie algebra of all (respectively, traceless) $N \times N$ matrices. We write $I_N$ for the identity matrix of $\mathfrak{gl}(N)$. For an $N\times N$ matrix $A$, the entries $A_{ij}$ will be indexed by  $1 \leq i,j \leq n$ if $N=n$, and by $0 \leq i,j \leq n$ for $N=n+1$.
In the latter case, we will refer to the top left entry as the $(0,0)$th entry. Similarly, the weights of $\mathfrak{gl} (n+1)$ will be written as $(n+1)$-tuples $(\mu_0, \dots ,\mu_{n})$, while those of $\mathfrak{gl} (n)$ as $n$-tuples $(\mu_1, \dots ,\mu_{n})$.

We will write  $E_{ij}$ for the $(i,j)$th elementary matrix of $\mathfrak{gl}(N)$, and  ${\bf E}_N$ for the $N \times N$ matrix whose $(i,j)$th entry is $E_{ij}$. Henceforth, we fix the Borel subalgebra $\mathfrak b_N$ and the Cartan subalgebra $\mathfrak h_N$ of $\mathfrak{gl}(N)$ to be the ones spanned by $E_{ij}$ ($i \leq j$) and $E_{kk}$ (all $k$), respectively. By $\mathfrak n_{\mathfrak{gl}(N)}^{+}$ (respectively, $\mathfrak n_{\mathfrak{gl}(N)}^{-}$) we denote the nilradical (respectively, the opposite nilradical) of $\mathfrak b_N$. In particular, $\mathfrak b_N = \mathfrak h \oplus \mathfrak n_{\mathfrak{gl}(N)}^{+}$ and 
$\mathfrak{gl}(N) = \mathfrak b_N \oplus \mathfrak n_{\mathfrak{gl}(N)}^{-}$. We will use these conventions both for $N=n$ and $N=n+1$.

By default, determinants will be column determinants. Namely, for an $n \times n$ matrix $A$ with entries $A_{ij}$ in an associative algebra,
$$
\det (A) := \sum_{\sigma \in S_n} A_{\sigma(1)1}A_{\sigma(2)2} \dots A_{\sigma(n)n},
$$
where $S_n$ is the $n$th symmetric group. The determinant of an $(n+1) \times (n+1)$ matrix is defined analogously.

For a square matrix $A$ and variable or a constant $v$, the expression $A+v$ (and $v+A$) should be understood as the sum of $A$ and the scalar matrix of the same size as $A$ having $v$ on the diagonal.

By ${\mathcal D (n)}$ we denote the algebra of polynomial differential operators on $\mathbb C^n$. The algebra ${\mathcal D' (n)}$ is the subalgebra of differential operators on ${\mathbb C} [t_0^{\pm 1}, t_1, \dots ,t_n]$ generated by $\frac{t_i}{t_0}$ for $i=1, \dots ,n$ and $t_0\partial_j$ for $j=0, \dots ,n$. Here $\partial_a$ stands for $\frac{\partial}{\partial t_a}$. We set ${\mathcal E}:= \sum_{i=0}^n t_i \partial_{i}$. 

We finish this section with some conventions that we will use throughout the paper. We assume that $\sum_{i=r}^{s} x_i = 0$ whenever $r>s$. For a subset $S$ of a ring $R$, by $(S)$ we denote the two-sided ideal of $R$ generated by $S$. For associative unital algebras $\mathcal A_1$ and  $\mathcal A_2$ and elements $a_i \in \mathcal A_i$, we often write $a_1$ and $a_2$ for the elements $a_1\otimes 1$ and $1 \otimes a_2$, respectively.

\section{The homomorphism $\rho$} \label{sec-rho}
In this section we define the homomorphism $\rho: U(\mathfrak{gl} (n+1)) \to \mathcal{D}' (n) \otimes U (\mathfrak{gl} (n))$ that  plays important role in this paper. This homomorphism can be considered as the $\mathfrak{gl} (n+1)$-version of a homomorphism $\rho_s: U(\mathfrak{sl} (n+1)) \to \mathcal{D}' (n) \otimes U (\mathfrak{gl} (n))$, which can be defined as follows, in brief. For a finite-dimensional $\mathfrak{gl}(n)$-module $V_0$ consider the corresponding trivial vector bundle $\mathcal V_0$ on $\mathbb P^n$. Then there is a natural map from $\mathfrak{sl}(n+1)$ to the algebra of differential operators on the space of sections of $\mathcal V_0$ over the open subset $\mathcal U_0 = \{t_0\neq 0\}$ of $\mathbb P^n$. This map leads to the homomorphism $\rho_s$. For details we refer the reader to, for example, \S 2 of \cite{GS}. The explicit formulas for $\rho_s$ (given in local coordinates) are listed in the proof of \cite[Lemma 2.3]{GS}.

Introduce the following elements of $\mathcal{D}' (n) \otimes U (\mathfrak{gl} (n))$:
$$
R_1 = - \frac{1}{n+1} \left( \mathcal{E} \otimes 1 + 1 \otimes G_1^{\mathfrak{gl} (n)}\right), \; R_2 = (\mathcal{E}+ n)\otimes 1,
$$
where $G_1^{\mathfrak{gl} (n)} = \sum_{i=1}^nE_{ii}$.
Note that $R_1$ and $R_2$ are central in $\mathcal{D}' (n) \otimes U (\mathfrak{gl} (n))$.

We next introduce three natural homomorphisms: the natural embedding $\iota_s : U(\mathfrak{sl}(n+1)) \to U(\mathfrak{gl}(n+1))$,  $\pi_g:  U(\mathfrak{gl}(n+1)) \to U(\mathfrak{sl}(n+1)) $, and  $\iota_g :  U(\mathfrak{gl}(n)) \to U(\mathfrak{sl}(n+1))$, as follows:
$$
\pi_g(B) = B - \frac{1}{n+1} \tr(B)I_{n+1},\;  \iota_g (C) = C - \tr(C) E_{00}.
$$
Define $\rho = \rho_s \pi_g$. The fact that $\rho_s$ is a homomorphism and the explicit formulas for $\rho_s$ imply the following.

\begin{lemma} 
The following correspondence 
\begin{eqnarray*}
E_{ab} &\mapsto& t_a \partial_b \otimes 1 + 1 \otimes E_{ab} + \delta_{ab}R_1 \; \mbox{  for }a,b>0,\\
E_{a0}  &\mapsto&  t_a \partial_0 \otimes 1 - \sum_{i>0} \frac{t_i}{t_0} \otimes E_{ai} \; \mbox{  for }a>0,\\
E_{0b} &\mapsto&  t_0 \partial_b \otimes 1 \;  \mbox{  for }b>0,\\
E_{00} &\mapsto& t_0 \partial_0 \otimes 1 + R_1.
\end{eqnarray*}
extends to the homomorphism $\rho: U(\mathfrak{gl} (n+1)) \to \mathcal{D}' (n) \otimes U (\mathfrak{gl} (n))$ of associative unital algebras.
\end{lemma}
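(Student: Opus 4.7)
The plan is to leverage the factorization $\rho=\rho_s\circ\pi_g$ built into the definition of $\rho$. Since $\rho_s$ is already known to be an associative algebra homomorphism (\S2 of \cite{GS}), the lemma reduces to (i) showing that $\pi_g$ extends to an algebra homomorphism $U(\mathfrak{gl}(n+1))\to U(\mathfrak{sl}(n+1))$, and (ii) checking the closed formulas on the generators $E_{ab}$.

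For (i), I would verify that $\pi_g$ is already a Lie algebra homomorphism $\mathfrak{gl}(n+1)\to\mathfrak{sl}(n+1)$: for $B,C\in\mathfrak{gl}(n+1)$ one has $\tr([B,C])=0$, so $\pi_g([B,C])=[B,C]$, and since $I_{n+1}$ is central in $\mathfrak{gl}(n+1)$, also $[\pi_g(B),\pi_g(C)]=[B,C]$. The universal property of the enveloping algebra then yields the required extension, and the composition $\rho=\rho_s\circ\pi_g$ is automatically a homomorphism of associative unital algebras.

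For (ii), I would proceed case by case using the explicit coordinate formulas for $\rho_s$ recorded in the proof of \cite[Lemma 2.3]{GS}. When $a\neq b$, $\pi_g(E_{ab})=E_{ab}$, so the three off-diagonal lines in the statement reproduce those formulas verbatim. When $a=b$, $\pi_g(E_{aa})=E_{aa}-\tfrac{1}{n+1}I_{n+1}$; expressing this in a Chevalley basis of $\mathfrak{sl}(n+1)$ and applying $\rho_s$ linearly (equivalently, extending the coordinate expressions $\mathbb C$-linearly to all of $\mathfrak{gl}(n+1)$) one finds the diagonal contributions assemble to $\rho_s(I_{n+1})=\sum_{a=0}^{n} t_a\partial_a\otimes 1+\sum_{a=1}^{n} 1\otimes E_{aa}=\mathcal{E}\otimes 1+1\otimes G_1^{\mathfrak{gl}(n)}$, so the subtracted $\tfrac{1}{n+1}I_{n+1}$ contributes precisely $R_1$ to every diagonal entry. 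This recovers the $+\delta_{ab}R_1$ in the first line and the $+R_1$ in the $E_{00}$ line. The only (mild) obstacle is this diagonal bookkeeping, but it is entirely mechanical once the Cartan-level formulas for $\rho_s$ from \cite{GS} are in hand.
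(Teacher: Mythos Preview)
Your proposal is correct and follows essentially the same approach as the paper. The paper defines $\rho=\rho_s\pi_g$ immediately before the lemma and then states only that ``the fact that $\rho_s$ is a homomorphism and the explicit formulas for $\rho_s$ imply the following''; your argument spells out precisely this, verifying that $\pi_g$ is a Lie algebra map and then matching the diagonal formulas. One small notational wrinkle: writing ``$\rho_s(I_{n+1})$'' is literally ill-posed since $I_{n+1}\notin\mathfrak{sl}(n+1)$, but your parenthetical about extending the coordinate expressions $\mathbb C$-linearly to $\mathfrak{gl}(n+1)$ is exactly the right fix, so the computation goes through.
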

 
We note that $\rho(G_1^{\mathfrak{gl}(n+1)})=0$ and  $\rho\iota_s = \rho_s$. Let us also define $$
\gamma: U(\mathfrak{sl} (n+1)) \to \mathcal{D}' (n) \otimes U (\mathfrak{sl} (n+1))
$$
by the identity $\gamma = (1\otimes \iota_g)\rho \iota_s$. We finish this section by collecting the identities for the homomorphisms that we introduced in this section. 

\begin{proposition} We have $\gamma = (1 \otimes \iota_g) \rho_s$, $\pi_g \iota_s = {\rm Id}$, and $\rho_s \pi_g = \rho$, and all other relations that directly follow from these three; in that sense, the following diagram is commutative.
$$
\xymatrix{ U(\mathfrak{gl}(n+1)) \ar@<0.5ex>[d]^{ \pi_g}   \ar[r]^{\rho\hspace{0.5cm}}  &  \mathcal{D}'(n) \otimes U(\mathfrak{gl}(n))   \ar@<0.5ex>[d] ^{1\otimes \iota_g}\\
U(\mathfrak{sl}(n+1)) \ar[u]^{\iota_s}    \ar[r]^{\gamma  \hspace{0.8cm}}   \ar[ur]^{\rho_s}  &  \mathcal{D}'(n) \otimes U(\mathfrak{sl}(n+1))     }
$$

 \end{proposition}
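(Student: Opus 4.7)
The plan is to observe that all three identities are either definitional or follow from a trivial check on generators, and then read off the commutativity of the diagram as a formal consequence.

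First I would dispatch $\rho_s \pi_g = \rho$: this is literally the definition of $\rho$ given just above the statement, so nothing to prove. Next I would verify $\pi_g \iota_s = \mathrm{Id}$ on generators of $U(\mathfrak{sl}(n+1))$. Since $\iota_s$ is the natural embedding, it sends a traceless matrix $B \in \mathfrak{sl}(n+1)$ to the same matrix viewed inside $\mathfrak{gl}(n+1)$; then $\pi_g(B) = B - \frac{1}{n+1}\tr(B) I_{n+1} = B$ because $\tr(B) = 0$. As $\pi_g \iota_s$ is an algebra homomorphism agreeing with the identity on a set of algebra generators, it equals the identity on all of $U(\mathfrak{sl}(n+1))$.

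For the remaining identity $\gamma = (1 \otimes \iota_g)\rho_s$, I would just compute
\[
\gamma \;=\; (1 \otimes \iota_g)\,\rho\,\iota_s \;=\; (1 \otimes \iota_g)\,\rho_s\,\pi_g\,\iota_s \;=\; (1 \otimes \iota_g)\,\rho_s,
\]
using successively the definition of $\gamma$, the identity $\rho = \rho_s \pi_g$, and the identity $\pi_g \iota_s = \mathrm{Id}$ established above.

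Finally, the commutativity of the displayed square and the commutativity of the upper triangle involving $\rho_s$ are precisely the three relations just proved (one for each edge-equation of the diagram), and any further compatibility one reads off the diagram is a formal consequence of composing these. There is no genuine obstacle here: the whole proposition is a bookkeeping assertion about the three named maps, and the only mildly non-trivial step is the one-line check of $\pi_g \iota_s = \mathrm{Id}$ on $\mathfrak{sl}(n+1)$-generators.
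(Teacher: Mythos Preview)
Your proof is correct and is exactly the routine verification one would expect; the paper itself omits a proof entirely, treating the proposition as an immediate consequence of the definitions of $\rho$, $\gamma$, $\pi_g$, and $\iota_s$ given just before it. Your argument simply makes explicit what the authors left to the reader.
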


\section{Images under Harish-Chandra isomorphisms} \label{sec-5}

In this section we relate the restriction of $\rho$ on the center $Z(\mathfrak{gl} (n+1))$ of $U(\mathfrak{gl} (n+1))$ with the Harish-Chandra isomorphisms. 

We first recall the definition of the Harish-Chandra isomorphism and define an isomorphism of Harish-Chandra type with domain ${\mathbb C}[\mathcal E] \otimes  Z(\mathfrak{gl} (n))$.

For a weight $\lambda = (\lambda_1, \dots ,\lambda_n)$ of $\mathfrak{gl} (n)$ denote by $M_n(\lambda)$ and $L_n(\lambda)$ the Verma module of highest weight $\lambda$ and its unique simple quotient, respectively. We similarly define the $\mathfrak{gl} (n+1)$-modules $M_{n+1}(\mu)$ and $L_{n+1}(\mu)$ for a weight $\mu = (\mu_0,\mu_1, \dots ,\mu_n)$ of $\mathfrak{gl}(n+1)$. In particular, if $\lambda = (\lambda_1, \dots ,\lambda_n)$, then there is a (highest weight) vector $v_0$ of $M_n(\lambda)$ such that $M_n(\lambda) = U(\mathfrak n_{\mathfrak{gl}(n)}^-) \otimes {\mathbb C}v_0$ as vector spaces, $E_{ab}v_0 = 0$ for $1\leq a <b \leq n$, and $E_{ii}v_0 = \lambda_i v_0$ for $i = 1, \dots ,n$. 

Henceforth we set  $\delta_N = (0,-1, \dots ,-N+1)$. If $(b_1,\dots,b_N) \in {\mathbb C^N}$, then the evaluation homomorphism ${\rm ev}_{b_1,\dots,b_N} : {\mathbb C}[x_1,\dots,x_N] \to \mathbb C$ is defined by ${\rm ev}_{b_1,\dots,b_N}(p) = p(b_1,\dots,b_N)$. Every $z' \in Z(\mathfrak{gl} (n))$ acts on $L(\lambda)$ as $\chi_{\lambda}(z') \mbox{Id}$, where $\chi_{\lambda} (z') = \mbox{ev}_{\lambda + \delta_n} (\chi_n(z'))$ and $\chi_n : Z(\mathfrak{gl} (n)) \to  {\mathbb C}[\ell_1,\dots,\ell_n]^{S_{n}}$ is the Harish-Chandra isomorphism. We similarly  define $\chi_{n+1}: Z(\mathfrak{gl} (n+1)) \to  {\mathbb C}[\ell_0, \dots ,\ell_n]^{S_{n+1}}$ using the action of any element of $z \in Z(\mathfrak{gl} (n+1))$ on a simple highest weight module of $\mathfrak{gl} (n+1)$.

Next, define $\chi_{0,n} : {\mathbb C}[\mathcal E] \otimes  Z(\mathfrak{gl} (n)) \to {\mathbb C}[\ell_0] \otimes {\mathbb C}[\ell_1, \dots ,\ell_n]^{S_{n}}$ by  $\chi_{0,n} \left(\sum_{i}\mathcal E^i \otimes z_i \right) = \sum_{i} \ell_0^i \otimes \chi_n(z_i)$. Note that $\chi_{0,n}$ is an isomorphism and  that $\ell = \chi_{0,n}(R_1)$, where $\ell := \\ -\frac{1}{n+1} \left( \ell_0 + \sum_{i=1}^n\ell_i+\frac{n(n-1)}{2}\right)$. The latter follows from the fact that 
$\chi_n(G_1^{\mathfrak{gl}(n)}) = \sum_{i=1}^n\ell_i+\frac{n(n-1)}{2}$ (see Example 7.3.4 in \cite{M}).

Recall that a weight $\mu$ of $\mathfrak{gl}(n+1)$ is \emph{antidominant} if $\mu_i - \mu_{i+1} \notin {\mathbb Z}_{\geq 0}$ for all $i = 0,\dots ,n$. A well-known fact is that $M_{n+1} (\mu)$ is simple if and only if $\mu$ is anti-dominant. Also, by a Theorem of Duflo, the annihilator of a simple Verma module  $M_{n+1} (\mu)$ is generated by $z-\chi_{\mu}(z)$, $z \in Z(\mathfrak{gl}(n+1))$; see for example \S 8.4.3 in \cite{Dix}. 

Let 
$$
\mathcal F_a = \mbox{Span} \{t_0^{a-k_1-\cdots -k_n}t_1^{k_1}\dots t_n^{k_n}\mid  k_1,\dots , k_n \in \mathbb Z_{\geq 0} \}
$$
and consider $\mathcal F_a$ as a  ${\mathcal D}'(n)$-module.  Note that  $\mathcal F_a = {\mathcal D}'(n)(t_0^a)$ and that $\mathcal E = a \mbox{Id}$ on $\mathcal F_a$.

\begin{lemma} \label{lem-faithful}
Let $\mathcal{AW}_n$ denote the set of all anti-dominant $\mathfrak{gl}(n)$-weights $\lambda$ such that $\lambda_1 \notin {\mathbb Z}$.  Then the modules $\bigoplus_{a \in \mathbb{Z}} \mathcal{F}_a$  and $\bigoplus_{\lambda \in \mathcal{AW}_n} M_n(\lambda)$  are faithful over $\mathcal{D}^{'}(n)$ and $U(\mathfrak{gl}(n))$, respectively.
\end{lemma}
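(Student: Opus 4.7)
For the first assertion, I note that $\bigoplus_{a \in \mathbb Z} \mathcal F_a$ is simply the Laurent polynomial ring $\mathbb C[t_0^{\pm 1}, t_1, \ldots, t_n]$, graded by total $t$-degree. Since $\mathcal D'(n)$ is defined as a subalgebra of the algebra of differential operators on this ring, any nonzero element of $\mathcal D'(n)$ is tautologically a nonzero operator on it, hence acts nontrivially on some element. The homogeneity of the generators $t_i/t_0$ and $t_0 \partial_j$ makes each $\mathcal F_a$ a submodule, so faithfulness of the direct sum is equivalent to faithfulness of the action on the whole ring.

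For the second assertion, my plan is to combine Duflo's theorem (already cited in Section~\ref{sec-5}), Kostant's theorem that $U(\mathfrak{gl}(n))$ is a free $Z(\mathfrak{gl}(n))$-module, and Zariski density. Let $u \in U(\mathfrak{gl}(n))$ annihilate $M_n(\lambda)$ for every $\lambda \in \mathcal{AW}_n$. Each such $M_n(\lambda)$ is simple (by anti-dominance), so Duflo gives $\mbox{Ann}(M_n(\lambda)) = U(\mathfrak{gl}(n)) \cdot J_\lambda$, where $J_\lambda := \ker(\chi_\lambda|_{Z(\mathfrak{gl}(n))})$ is a maximal ideal of the center. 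Kostant's freeness, applied through the decomposition $\mathfrak{gl}(n) \cong \mathfrak{sl}(n) \oplus \mathbb C G_1^{\mathfrak{gl}(n)}$, allows one to fix a $Z(\mathfrak{gl}(n))$-basis of $U(\mathfrak{gl}(n))$ and thereby deduce the identity
$$
\bigcap_{\lambda \in \mathcal{AW}_n} U(\mathfrak{gl}(n)) \cdot J_\lambda \;=\; U(\mathfrak{gl}(n)) \cdot \Bigl( \bigcap_{\lambda \in \mathcal{AW}_n} J_\lambda \Bigr).
$$
Under $\chi_n$, each $J_\lambda$ corresponds to the maximal ideal of $\mathbb C[\ell_1, \ldots, \ell_n]^{S_n}$ at $\lambda + \delta_n$; the set $\mathcal{AW}_n$ is Zariski dense in $\mathbb C^n$ (its complement is a countable union of affine hyperplanes, which cannot cover $\mathbb C^n$, thanks to the uncountability of $\mathbb C$), so every polynomial vanishing at all such $\lambda + \delta_n$ is identically zero. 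The central intersection is therefore zero, which forces $u = 0$.

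The main technical input is Kostant's freeness, which powers the intersection-product identity above. If one prefers to avoid it, an alternative route is to exploit the polynomial-in-$\lambda$ structure directly: identifying $M_n(\lambda) \cong U(\mathfrak n^-_{\mathfrak{gl}(n)})$ via $f \cdot v_\lambda \leftrightarrow f$, the action of $u$ becomes a family of endomorphisms whose matrix coefficients in the PBW basis are polynomials in $\lambda$. Zariski density of $\mathcal{AW}_n$ then forces $u \cdot v_\lambda = 0$ for every $\lambda \in \mathbb C^n$, which places $u$ in the left ideal $U(\mathfrak{gl}(n)) \cdot \mathfrak n^+_{\mathfrak{gl}(n)}$; iterating the same argument applied to $u f$ for each PBW monomial $f \in U(\mathfrak n^-_{\mathfrak{gl}(n)})$ then yields $u = 0$.
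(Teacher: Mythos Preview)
Your argument is correct on both counts. For the $\mathcal{D}'(n)$ part you take a shorter route than the paper: since $\bigoplus_{a\in\mathbb Z}\mathcal F_a$ coincides with $\mathbb C[t_0^{\pm1},t_1,\dots,t_n]$ and $\mathcal D'(n)$ is \emph{defined} as a subalgebra of operators on this ring, faithfulness is tautological. The paper instead works inside the ambient differential-operator algebra and, for any putative nonzero annihilator, produces an explicit monomial on which it acts nontrivially (by selecting a $\partial$-lexicographically maximal term and tracking the resulting coefficient as a polynomial in the exponents) --- in effect re-deriving that Weyl-type operators act faithfully on their defining module. Your observation bypasses that computation entirely. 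For the $U(\mathfrak{gl}(n))$ part you and the paper begin identically with Duflo's theorem; the paper then leaves the vanishing of $\bigcap_{\lambda} U\cdot\ker\chi_\lambda$ implicit, whereas you supply it via Kostant's freeness (extended to $\mathfrak{gl}(n)$ through the splitting $\mathfrak{sl}(n)\oplus\mathbb C G_1^{\mathfrak{gl}(n)}$) together with Zariski density of $\mathcal{AW}_n$, which is precisely the missing ingredient. Your alternative closing paragraph is a bit compressed at the ``iterating'' step --- to make it rigorous one should run an induction on the minimal $\mathfrak n^+_{\mathfrak{gl}(n)}$-degree appearing in the PBW expansion of $u$, showing that applying $u$ to $f v_\lambda$ with $f$ of that same degree kills the lowest-degree coefficients --- but since your primary Kostant-based argument already closes the proof, this does not affect its validity.
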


\begin{proof}
The fact that $\bigoplus_{\lambda \in \mathcal{AW}_n} M_n(\lambda)$ is faithful follows from the Theorem of Duflo and the fact that the annihilator of the direct sum of modules is the intersection of their annihilators.

We now prove that the annihilator of $\bigoplus_{a \in \mathbb{Z}} \mathcal{F}_a$ is trivial. Suppose for the sake of contradiction that $x \in \mathcal{D}^{'}(n)$ annihilates $\bigoplus_{a \in \mathbb{Z}} \mathcal{F}_a$. Next, choose a monomial $x_m = t_0^{a_0} \dots t_n^{a_n} \partial_0 ^ {b_0} \dots \partial_n ^{b_n}$ in the sum expansion of $x$ that is $\partial$-lexicographically maximal (i.e. relative to  the $\partial_0$-degree, $\partial_1$-degree, $\dots$, $\partial_n$-degree) among all monomials in $x$. Then, as $e_0, \dots ,e_n$ vary in a suitable set, the coefficient of $t_0^{e_0+a_0-b_0} \dots t_n^{e_n+a_n-b_n}$ in $x(t_0^{e_0}\dots t_n^{e_n})$ is equal to some polynomial in $e_0, \dots , e_n$. Furthermore, the leximaximal property of $t_0^{a_0} \dots t_n^{a_n} \partial_0 ^ {b_0} \dots \partial_n ^{b_n}$ yields that the coefficient of $e_0^{b_0}\dots e_n^{b_n}$ in this polynomial is nonzero. Thus this polynomial is nonzero, and for suitable $e_0, \dots, e_n$ it evaluates to a nonzero number, contradicting the fact that $x$ annihilates $\mathcal{D}^{'} (n)$. 
\end{proof}
\begin{theorem} \label{thm-hc-rho}
Let $\tau$ be the endomorphism on ${\mathbb C}[\ell_0,\ell_1,\dots ,\ell_n]$ defined by $\tau (p(\ell_0,\ell_1,\dots,\ell_n)) = p(\ell_0+ \ell, \ell_1+ \ell -1,\dots,\ell_n+\ell-1)$. Then we have that 
$$
\rho|_{Z(\mathfrak{gl} (n+1))} = \chi_{0,n}^{-1} \tau \chi_{n+1}.
$$
In particular, $\rho \left( Z(\mathfrak{gl} (n+1)) \right) $ is a subalgebra of ${\mathbb C}[\mathcal E] \otimes  Z(\mathfrak{gl} (n))$, and the following diagram  is commutative:
\begin{equation}
\begin{CD}
Z(\mathfrak{gl} (n+1)) @>\rho>>{\mathbb C}[\mathcal E] \otimes  Z(\mathfrak{gl} (n))\\
@V \chi_{n+1} VV @VV \chi_{0,n} V\\
{\mathbb C}[\ell_0,\ell_1,\dots ,\ell_n]^{S_{n+1}} @>\tau>>{\mathbb C}[\ell_0] \otimes {\mathbb C}[\ell_1,\dots,\ell_n]^{S_{n}}
\end{CD}\notag
\end{equation}

\medskip
\end{theorem}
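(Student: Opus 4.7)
The plan is to compare the two sides as operators on the faithful family of modules
$\bigoplus_{a \in \mathbb{Z}}\bigoplus_{\lambda \in \mathcal{AW}_n} \mathcal{F}_a \otimes M_n(\lambda)$.
By Lemma \ref{lem-faithful} each factor is faithful, and since the tensor product of faithful modules is faithful over the tensor product of algebras (because $A_i \hookrightarrow \operatorname{End}(M_i)$ gives an injection $A_1 \otimes A_2 \hookrightarrow \operatorname{End}(M_1) \otimes \operatorname{End}(M_2)$), it suffices to show $\rho(z)$ and $y := \chi_{0,n}^{-1}\tau\chi_{n+1}(z)$ act identically on each summand $\mathcal{F}_a \otimes M_n(\lambda)$. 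The action of $y$ is the easy side: $y \in \mathbb{C}[\mathcal{E}] \otimes Z(\mathfrak{gl}(n))$, $\mathcal{E}$ acts on $\mathcal{F}_a$ as $a$, and every $z' \in Z(\mathfrak{gl}(n))$ acts on the Verma module $M_n(\lambda)$ as $\chi_\lambda(z')$, so $y$ acts as the scalar
$$
c \;:=\; (\tau\chi_{n+1}(z))(a,\, \lambda_1,\, \lambda_2-1,\, \ldots,\, \lambda_n - n + 1).
$$

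For $\rho(z)$, the formulas defining $\rho$ show that $v := t_0^a \otimes v_0$ is a $\rho$-highest weight vector of weight $\mu$ with $\mu_0 = a + s$, $\mu_i = \lambda_i + s$ for $i \geq 1$, and $s := -\tfrac{1}{n+1}(a + \sum_i \lambda_i)$: the raising operators $\rho(E_{ij})$ with $i < j$ all annihilate $v$ (either $t_i\partial_j$ kills $t_0^a$ or $E_{ij}$ kills $v_0$), and on the Cartan one uses $R_1 \cdot v = s\, v$. Therefore the $\rho(U(\mathfrak{gl}(n+1)))$-submodule $W_{a,\lambda}$ of $\mathcal{F}_a \otimes M_n(\lambda)$ generated by $v$ is a quotient of $M_{n+1}(\mu)$, so $z$ acts on $W_{a,\lambda}$ as $\chi_\mu(z)$. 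A direct substitution gives $\mu + \delta_{n+1} = (a+s,\, \lambda_1+s-1,\, \ldots,\, \lambda_n + s - n)$, and evaluating $\ell = -\tfrac{1}{n+1}\bigl(\ell_0 + \sum \ell_i + \tfrac{n(n-1)}{2}\bigr)$ at $\ell_0 = a$, $\ell_i = \lambda_i - i + 1$ yields exactly $s$ (the $\tfrac{n(n-1)}{2}$ term cancels the telescoping $-\sum(i-1)$). Hence $\chi_\mu(z) = c$, and so $\rho(z) - y$ vanishes on $W_{a,\lambda}$.

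To conclude, I need $W_{a,\lambda} = \mathcal{F}_a \otimes M_n(\lambda)$. Acting on $v$ by $\rho(E_{ba}) = t_b\partial_a \otimes 1 + 1 \otimes E_{ba}$ with $0 < a < b$ and iterating gives $t_0^a \otimes M_n(\lambda) \subseteq W_{a,\lambda}$; then applying $\rho(E_{j0}) = t_j\partial_0 \otimes 1 - \sum_k (t_k/t_0) \otimes E_{jk}$ to those vectors, and combining with further $\mathfrak{gl}(n)$-intertwiners via $\rho$, one inducts on $|k|$ to produce each monomial $t_0^{a-|k|} t_1^{k_1}\cdots t_n^{k_n} \otimes w$. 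The hypothesis $\lambda_1 \notin \mathbb{Z}$ ensures that the scalar factors of the form $a + m - \lambda_1$ (with $a + m \in \mathbb{Z}$) appearing in this process never vanish. I expect this generation step to be the main technical obstacle: for $j \geq 2$ the coefficients produced by $\rho(E_{j0})$ also involve $\lambda_j$, which may be integral, so the induction requires careful organization --- for instance, first using commutators with $\rho(E_{1j})$ to trade the $j \geq 2$ case back to $j = 1$ --- to avoid degenerate coefficient vanishings.
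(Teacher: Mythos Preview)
Your overall strategy---reduce to action on each $\mathcal{F}_a \otimes M_n(\lambda)$ by faithfulness, identify $t_0^a \otimes v_0$ as a highest weight vector of weight $\mu$, and match the scalars---is exactly the paper's. The only substantive difference is your final step, where you try to show $W_{a,\lambda} = \mathcal{F}_a \otimes M_n(\lambda)$ by a direct inductive generation. This is the step you yourself flag as the main technical obstacle, and it is indeed delicate for the reasons you describe.

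The paper avoids this computation entirely by a two-line argument you have almost set up already. Since $a \in \mathbb{Z}$ and $\lambda \in \mathcal{AW}_n$ (in particular $\lambda_1 \notin \mathbb{Z}$), the shifted weight $\mu = \tilde{\lambda}$ is anti-dominant, so $M_{n+1}(\tilde{\lambda})$ is \emph{simple}. The universal map $M_{n+1}(\tilde{\lambda}) \twoheadrightarrow W_{a,\lambda}$ is therefore an isomorphism. Now compare formal characters: the monomial basis $t_0^{a-|k|}t_1^{k_1}\cdots t_n^{k_n} \otimes u v_\lambda$ of $\mathcal{F}_a \otimes M_n(\lambda)$ and the PBW basis $E_{10}^{k_1}\cdots E_{n0}^{k_n} u\, v_{\tilde{\lambda}}$ of $M_{n+1}(\tilde{\lambda})$ match weight for weight, so $\operatorname{ch}(W_{a,\lambda}) = \operatorname{ch}(M_{n+1}(\tilde{\lambda})) = \operatorname{ch}(\mathcal{F}_a \otimes M_n(\lambda))$, and hence $W_{a,\lambda}$ is the whole module. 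This replaces your recursive coefficient-chasing with a single character identity and the standard fact about simple Verma modules; the hypothesis $\lambda_1 \notin \mathbb{Z}$ is used precisely to guarantee anti-dominance of $\tilde{\lambda}$ rather than to control individual scalar factors in a generation process.
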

\begin{proof}
Let $ \lambda = (\lambda_1,\dots ,\lambda_n)$ be in $\mathcal{AW}_n$ and $a \in {\mathbb Z}$. Also, let $M(a,\lambda) = {\mathcal F}_a \otimes M_n (\lambda)$. We first note that, in order to prove the identity in the theorem, it is sufficient to check that $\rho (z) = \chi_{0,n}^{-1} \tau \chi_{n+1} (z)$ for all $z \in Z(\mathfrak{gl} (n+1))$, as an identity of endomorphisms of $M(a,\lambda)$. Indeed, by Lemma  \ref{lem-faithful} and by the fact that the tensor product of faithful modules is a faithful module (see, for example, \cite{Berg}), the module $\bigoplus_{a \in \mathbb Z, \lambda \in {\mathcal{AW}}_n} M(a,\lambda)$ is a faithful module over $\mathcal D'(n) \otimes U(\mathfrak{gl} (n))$.

We next observe that if $M(a,\lambda)$ is considered as a $\mathfrak{gl} (n+1)$-module through $\rho$, then $M(a,\lambda) \cong M_{n+1}(\tilde{\lambda})$, where $\tilde{\lambda} = (a+ r_1, \lambda_1 + r_1,\dots ,\lambda_n + r_1)$ and $r_1 = -\frac{1}{n+1} \left( a + \sum_{i=1}^n \lambda_i \right)$. To prove this, let us fix  a highest weight vector $v_{\lambda}$ of $M_n(\lambda)$. Then it is straightforward to check that $E_{ab}(t_0^a \otimes v_{\lambda}) = 0$ for $a <b$ and that the weight of $t_0^a \otimes v_{\lambda}$ is $\tilde{\lambda}$. On the other hand, since $a \in \mathbb Z$ and $\lambda \in {\mathcal{AW}}_n$, $\tilde{\lambda}$ is anti-dominant (since we know $\lambda_1 \notin \mathbb{Z}$). Hence, $M_{n+1}(\tilde{\lambda})$ is simple. To conclude the proof of $M(a,\lambda) \cong M_{n+1}(\tilde{\lambda})$ we show that both modules have the same formal characters. Indeed, observe that for a monomial $u \in U(\mathfrak{n}_{\mathfrak{gl}(n)}^-)$,  the vector $t_0^a \left( t_1/t_0 \right)^{k_1}\dots \left( t_n/t_0 \right)^{k_n} \otimes uv_{\lambda}$ in $M(a,\lambda)$, and the vector $E_{10}^{k_1}\dots E_{n0}^{k_n} u v_{\tilde{\lambda}}$ in $M_{n+1}(\tilde{\lambda})$, have the same weights. Therefore,
$$
\mbox{ch} \left( {\mathcal F}_a \otimes M_n (\lambda) \right) = \mbox{ch} (t_0^a\mathbb C \left[t_1/t_0,\dots ,t_n/t_0\right] \otimes U(\mathfrak{n}_{\mathfrak{gl}(n)}^-) v_{\lambda}) = \mbox{ch} (U(\mathfrak{n}_{\mathfrak{gl}(n+1)}^-) v_{\tilde{\lambda}}).  
$$

Since $M(a,\lambda) \cong M_{n+1}(\tilde{\lambda})$, we have that 
 every $z \in Z(\mathfrak{gl} (n+1))$  acts on  $M(a,\lambda)$ as $\chi_{\tilde{\lambda}}(z) \mbox{Id}$. Recall that by definition,
$$
\chi_{\lambda} (z') = \mbox{ev}_{\lambda + \delta_n} (\chi_n(z')),\; \chi_{\tilde{\lambda}} (z) = \mbox{ev}_{\tilde{\lambda} + \delta_{n+1}}(\chi_{n+1}(z)). 
$$ 
for every  $z' \in Z(\mathfrak{gl} (n))$ and $z \in Z(\mathfrak{gl} (n+1))$.  

On the other hand, if $\xi = \chi_{0,n}^{-1} \tau \chi_{n+1}$, then $\xi(z)$ acts on $M(a,\lambda)$ as $\chi_{a,\lambda} (\xi(z)) \mbox{Id}$, where $\chi_{a,\lambda} \left(\sum_{i}\mathcal E^i \otimes z_i' \right) = \sum_{i} a^i \chi_{\lambda}(z_i')$, for $z_i' \in Z(\mathfrak{gl} (n))$. Let $p = \chi_{n+1}(z)$. It remains to prove that $\chi_{a,\lambda} \chi_{0,n}^{-1} \tau (p) = \mbox{ev}_{\tilde{\lambda} + \delta_{n+1}}p$.
This follows from the fact that 
$\chi_{a,\lambda} ({\mathcal E}^i \otimes z')  = \mbox{ev}_{a,\lambda + \delta_n} (\chi_{0,n} ({\mathcal E}^i \otimes z'))$ for every $z' \in Z(\mathfrak{gl} (n))$ and that $\mbox{ev}_{a,\lambda + \delta_n} \tau = \mbox{ev}_{\tilde{\lambda} + \delta_{n+1}}$.
\end{proof}

\section{Capelli-type determinants} \label{sec-6}
For any formal variable $T$, we define the Capelli determinant of $\mathfrak{gl}(n+1)$ and $\mathfrak{gl}(n)$ as follows:

$$
C_{n+1}(T) = \det \left[ \begin{matrix} E_{00} - T  & E_{01} & \cdots & E_{0n} \\ E_{10} & E_{11} - T-1 & \cdots & E_{1n} \\
\vdots & \vdots & \ddots  & \vdots\\
 E_{n0} & E_{n1} & \cdots & E_{nn} - T - n 
\end{matrix} \right] 
$$

and

$$
C_n(T) = \det \left[ \begin{matrix} E_{11} - T  & E_{12} & \cdots & E_{1n} \\ E_{21} & E_{22} - T-1 & \cdots & E_{2n} \\
\vdots & \vdots & \ddots  & \vdots\\
 E_{n1} & E_{n2} & \cdots & E_{nn} - T - n +1
\end{matrix} \right], 
$$
respectively. Note that $T$ is a formal variable that commutes with all $E_{ij}$, and $C_n(T)$ and $C_{n+1}(T)$ will be treated as polynomials in this formal variable.  We also note that the polynomials $C_{n+1} (T)$ appear in \cite{M} and \cite{U} with a slight change. Namely, the polynomial $C_{n+1}^{M}(T)$ defined in \S 7.1 of \cite{M} (also called Capelli determinant) is related to our $C_{n+1} (T)$ via the identity $C_{n+1} (T) = C_{n+1}^M (-T)$. On the other hand, the polynomial $C_{n+1}^U(T)$ defined in \cite{U} satisfies the relation $C_{n+1} (T) = C_{n+1}^U (T+n)$.

Define $C_{\rho} (T)$ via the identity $C_{\rho} (\rho(T)) = \rho(C_{n+1} (T))$. For convenience we will write $T$ for $\rho(T)$.
\begin{theorem} \label{rho-capelli}
The following identity holds:
$$
C_{\rho}(T+R_1) = (\mathcal{E}-T)C_n(T+1).
$$
In particular, $C_{\rho} (\mathcal{E}+R_1) = 0$. 
\end{theorem}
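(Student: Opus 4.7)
The plan is to reduce the identity to an equality of polynomials under the Harish-Chandra isomorphism, using Theorem \ref{thm-hc-rho}. Writing $C_{n+1}(T) = \sum_k C_k T^k$ with $C_k \in Z(\mathfrak{gl}(n+1))$, the theorem applies coefficientwise to give $\chi_{0,n}(C_\rho(T)) = \tau\chi_{n+1}(C_{n+1}(T))$. Since $\chi_{0,n}$ is an isomorphism, it suffices to verify the claimed identity after applying $\chi_{0,n}$ to both sides.

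The central input is the classical Harish-Chandra image of the Capelli determinant:
$$\chi_{n+1}(C_{n+1}(T)) = \prod_{i=0}^n (\ell_i - T), \qquad \chi_n(C_n(T)) = \prod_{i=1}^n(\ell_i - T).$$
To establish the first, I would act on a highest weight vector $v_\mu$ of $M_{n+1}(\mu)$: in the column-determinant expansion $\sum_\sigma \mathrm{sgn}(\sigma)\, A_{\sigma(0),0}\cdots A_{\sigma(n),n} v_\mu$, the rightmost factor contains $E_{\sigma(n),n}$, which kills $v_\mu$ whenever $\sigma(n) < n$ (as a positive root vector) and is impossible for $\sigma(n) > n$, forcing $\sigma(n) = n$. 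Inducting downward forces $\sigma = \mathrm{id}$, so $C_{n+1}(T)v_\mu = \prod_i(\mu_i - T - i)v_\mu$, which is the evaluation at $\mu + \delta_{n+1}$ of $\prod_{i=0}^n(\ell_i - T)$; the $\mathfrak{gl}(n)$ statement is analogous.

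Applying $\tau$ yields
$$\chi_{0,n}(C_\rho(T)) = (\ell_0 + \ell - T)\prod_{i=1}^n(\ell_i + \ell - 1 - T).$$
Since $R_1$ is central in $\mathcal D'(n) \otimes U(\mathfrak{gl}(n))$ and $\chi_{0,n}(R_1) = \ell$, a routine binomial check shows that substituting $T \to T + R_1$ in $C_\rho(T)$ and then applying $\chi_{0,n}$ agrees with applying $\chi_{0,n}$ first and then substituting $T \to T + \ell$. Carrying out the latter substitution collapses $\ell_0 + \ell - T$ to $\ell_0 - T$ and each $\ell_i + \ell - 1 - T$ to $\ell_i - T - 1$, giving
$$\chi_{0,n}(C_\rho(T+R_1)) = (\ell_0 - T)\prod_{i=1}^n(\ell_i - T - 1) = \chi_{0,n}((\mathcal E - T)C_n(T+1)),$$
where the last equality uses $\chi_n(C_n(T+1)) = \prod_{i=1}^n(\ell_i - T - 1)$. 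Injectivity of $\chi_{0,n}$ then proves the main identity; specializing $T = \mathcal E$ (permissible since $\mathcal E$ is central in $\mathcal D'(n)$) gives $C_\rho(\mathcal E + R_1) = 0\cdot C_n(\mathcal E + 1) = 0$. The main obstacle is the Harish-Chandra image in step two, but this is a classical Capelli-type identity that can be invoked from \cite{M} or proved directly as sketched.
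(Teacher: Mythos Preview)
Your proof is correct and follows essentially the same approach as the paper's: reduce to the Harish-Chandra side via Theorem~\ref{thm-hc-rho}, use the known factorization of the Harish-Chandra image of the Capelli determinant (which the paper simply cites from \cite{M} rather than re-derives), apply $\tau$, and then undo the $R_1$-shift using $\chi_{0,n}(R_1)=\ell$. Your sketch of the highest-weight computation for $\chi_{n+1}(C_{n+1}(T))$ is a standard argument and a reasonable substitute for the citation.
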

\begin{proof} The identity is equivalent to the following:
$$
\rho(C_{n+1}^M (T)) = (\mathcal{E}+T+R_1)C_n^M(T+R_1-1).
$$
To prove the latter we  use that
$$
\chi_n (C_{n}^M (T)) = (T+\ell_1)\dots (T+\ell_n).
$$
(Theorem 7.1.1 in \cite{M}) and the corresponding formula for $\chi_{n+1} (C_{n+1}^M (T))$. To complete the proof we use Theorem \ref{thm-hc-rho} and that $\chi_{0,n}(R_1) = \ell$.
\end{proof}

\begin{example}
In the case $n=2$, the identity in Theorem \ref{rho-capelli} is the following:
\begin{eqnarray*}
&& \det \left[ \begin{matrix} t_0\partial_0 - T   & t_0 \partial_1  & t_0\partial_2 \\
 t_1 \partial_0 - \frac{t_1}{t_0}E_{11} -\frac{t_2}{t_0}E_{12} &  t_1 \partial_1+E_{11}  - T - 1 & t_1\partial_2+E_{12} \\
t_2 \partial_0 - \frac{t_1}{t_0}E_{21} -\frac{t_2}{t_0}E_{22} &  t_2 \partial_1 + E_{21} & t_2\partial_2+E_{22}-T-2
 
\end{matrix} \right]
 \\& = &  (\mathcal E - T)C_2(T+1) = (\mathcal{E} - T) \det \left[ \begin{matrix} E_{11}-T-1   &  E_{12}  \\
E_{21} &  E_{22}-T-2 
\end{matrix} \right].
\end{eqnarray*}
\end{example}

\begin{corollary}\label{cor-ch} The following identities hold:
$$ C_{\rho} ({\bf E}_n+R_1-n) = 0, \; C_{\rho} ({\bf E}_n^t-1+R_1) = 0$$
\end{corollary}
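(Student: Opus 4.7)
The plan is to deduce Corollary \ref{cor-ch} from Theorem \ref{rho-capelli} by substituting matrix arguments for the central formal variable $T$, thereby reducing the claim to the two classical noncommutative Cayley-Hamilton identities for the Capelli determinant of $\mathfrak{gl}(n)$.

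First, I would observe that the identity $C_\rho(T + R_1) = (\mathcal{E} - T)\, C_n(T+1)$ of Theorem \ref{rho-capelli} is an equality of polynomials in the central formal variable $T$, with coefficients in the associative algebra $\mathcal{D}'(n) \otimes U(\mathfrak{gl}(n))$. As such, it persists under any matrix substitution $T \mapsto M$, meaning that if we write each side as $\sum_k c_k T^k$ and replace $T$ by an $n \times n$ matrix $M$ (yielding $\sum_k c_k M^k$), we obtain an equality of $n \times n$ matrices. Since $R_1$ and $\mathcal{E}$ are central in the algebra, the shifts $M + R_1$, $\mathcal{E} - M$, etc., are bona fide matrices with entries in $\mathcal{D}'(n) \otimes U(\mathfrak{gl}(n))$.

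For the first identity I would substitute $T = {\bf E}_n - n$, so that $T + R_1 = {\bf E}_n + R_1 - n$ and $T + 1 = {\bf E}_n + 1 - n$, yielding
$$C_\rho({\bf E}_n + R_1 - n) \;=\; (\mathcal{E} + n - {\bf E}_n) \, C_n({\bf E}_n + 1 - n).$$
The right-hand side vanishes by the classical Cayley-Hamilton identity $C_n({\bf E}_n + 1 - n) = 0$ for the Capelli determinant of $\mathfrak{gl}(n)$. For the second identity, substituting $T = {\bf E}_n^t - 1$ (so $T + R_1 = {\bf E}_n^t - 1 + R_1$ and $T + 1 = {\bf E}_n^t$) gives
$$C_\rho({\bf E}_n^t - 1 + R_1) \;=\; (\mathcal{E} + 1 - {\bf E}_n^t) \, C_n({\bf E}_n^t),$$
which vanishes by the companion (transposed) Cayley-Hamilton identity $C_n({\bf E}_n^t) = 0$.

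The main obstacle is invoking the two classical noncommutative Cayley-Hamilton identities---$C_n({\bf E}_n + 1 - n) = 0$ and $C_n({\bf E}_n^t) = 0$, both as $n \times n$ matrix equalities---in exactly the matrix-substitution convention inherited from Theorem \ref{rho-capelli}. Both identities are standard in the literature on Capelli determinants and the Yangian (see, e.g., \cite{M}), but some care is required to align the diagonal shifts and the placement (left vs.\ right) of coefficients relative to the substituted matrix with the conventions used here. Once this bookkeeping is verified, both parts of the corollary follow immediately from the two substitutions above.
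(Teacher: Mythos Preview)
Your proposal is correct and follows essentially the same approach as the paper: substitute $T = {\bf E}_n - n$ and $T = {\bf E}_n^t - 1$ into the identity of Theorem \ref{rho-capelli} and invoke the noncommutative Cayley--Hamilton identities $C_n({\bf E}_n+1-n)=0$ and $C_n({\bf E}_n^t)=0$ (equivalently, $C_n^M(-{\bf E}_n+n-1)=C_n^M(-{\bf E}_n^t)=0$, which is Theorem 7.2.1 in \cite{M}). Your caution about the placement of coefficients in the matrix substitution is warranted but harmless here, since the coefficients of $C_n(T+1)$ lie in $Z(\mathfrak{gl}(n))$ and $\mathcal{E}$ is central, so the factorization $C_\rho(T+R_1)=(\mathcal{E}-T)C_n(T+1)$ survives the substitution.
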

\begin{proof}

The identities follow from Theorem \ref{rho-capelli} and the noncommutative version of the Cayley-Hamilton theorem:
$$
C_n^M(-{\bf E}_n+n-1) = C_n^M(-{\bf E}_n^t) = 0. 
$$
(This is Theorem 7.2.1 in \cite{M}.)
\end{proof}

We conclude this section by giving explicit formulas of the images of the Gelfand invariants of $Z(\mathfrak{gl}(n+1))$ under $\rho$. To define these invariants we first introduce some special elements in $U(\mathfrak{gl}(n+1))$. Set $r_0^{\mathfrak{gl}(n+1)} (a,b) = \delta_{ab}$, and let
\begin{equation} \label{def-r-k}
r_{k+1}^{\mathfrak{gl}(n+1)} (a,b) = \sum_{i_1,\dots,i_k} E_{ai_1}E_{i_1i_2}\dots E_{i_kb}
\end{equation}
where the sum runs over all (not necessarily distinct) $0\leq i_1,\dots,i_k \leq n$. Then $$G_k^{{\mathfrak{gl}(n+1)}} = \sum_{i=0}^n r_k^{{\mathfrak{gl}(n+1)}} (i,i)$$ is the \emph{Gelfand invariant of degree $k$ of  $\mathfrak{gl} (n+1)$}. In other words, $G_k^{\mathfrak{gl}(n+1)} = \tr ({\bf E}_{n+1}^{\,k})$. We define $r_k^{\mathfrak{gl}(n)}(a,b)$ and $G_k^{\mathfrak{gl}(n)}$ for $\mathfrak{gl}(n)$ analogously. It is well-known fact that $Z(\mathfrak{gl} (n+1))$ is a polynomial algebra in  $G_k^{{\mathfrak{gl}(n+1)}}$ for $k=1,2,\dots,n+1$.

\begin{theorem} \label{thm-rho-g}The following formula holds for all positive integers $k$:

\begin{eqnarray*}
\rho(G_k^{\mathfrak{gl}(n+1)})& = & \left( \sum_{g=0}^{k-1} \binom{k}{g} R_1^g R_2^{k-1-g} \right)\left( \mathcal{E} \otimes 1 \right) + (n+1) R_1^k + \sum_{g=0}^{k-1} \binom{k}{g} R_1^g \left( 1 \otimes G_{k-g}^{\mathfrak{gl}(n)} \right)\\ 
&& - \sum_{m=2}^k \left( \sum_{g=0}^{k-m} \binom{k}{g} R_1^g R_2^{k-m-g} \right)\left( 1 \otimes G_{m-1}^{\mathfrak{gl}(n)} \right).
\end{eqnarray*}
\end{theorem}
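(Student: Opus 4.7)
The plan is to deduce Theorem \ref{thm-rho-g} by combining Theorem \ref{rho-capelli} with the noncommutative Newton identity of \cite{U} (see also \cite[Chap.\ 7]{M}) that expresses the Gelfand generators $G_k^{\mathfrak{gl}(N)}$ as polynomials in the coefficients of the Capelli determinant $C_N(T)$, used for both $N=n$ and $N=n+1$. The key technical point is that $R_1$ and $R_2$ are central in $\mathcal{D}'(n)\otimes U(\mathfrak{gl}(n))$, so shifting $T$ by $R_1$ commutes with every other operation and binomial expansions in $R_1$ are legitimate.

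First I would rewrite Theorem \ref{rho-capelli}, via the substitution $T\mapsto T-R_1$, as
$$
\rho(C_{n+1}(T)) \;=\; (\mathcal{E}+R_1-T)\,C_n(T-R_1+1).
$$
Next, using the Newton identity for $\mathfrak{gl}(n+1)$ in generating-series form (the classical relation between power sums and elementary symmetric functions, in its noncommutative Capelli analog), I would convert this into an identity of formal Laurent series in $T^{-1}$ whose coefficient of $T^{-k-1}$ gives $\rho(G_k^{\mathfrak{gl}(n+1)})$ on the left. On the right, applying the analogous Newton identity in the opposite direction to $C_n(T-R_1+1)$ expresses the coefficients in terms of $G_m^{\mathfrak{gl}(n)}$. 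The binomial sums $\sum_g \binom{k}{g}R_1^g(\cdots)$ arise naturally from the Taylor expansion of $C_n(T-R_1+1)$ about $T+1$ (equivalently from $T-R_1=(T+1)-(R_1+1)$), while the linear prefactor $(\mathcal{E}+R_1-T)$ contributes both the $(\mathcal{E}\otimes 1)$-summand and, after collecting the constant term in $T$, the $(n+1)R_1^k$ summand, consistent with $\rho(G_1^{\mathfrak{gl}(n+1)})=0$ when $k=1$. The appearance of $R_2=\mathcal{E}+n$ rather than $\mathcal{E}$ in the first and last sums of the statement traces back to the fact that $C_{n+1}(T)$ has diagonal shifts $T,T+1,\dots,T+n$ whereas $C_n(T)$ has only $T,T+1,\dots,T+n-1$, producing an extra shift by $n$.

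The main obstacle will be the bookkeeping required to reorganize the resulting double sum into the precise telescoping/binomial shape appearing in the statement. Since $R_1$ and $R_2$ are central, this is in principle a commutative-algebra calculation in the polynomial ring $\mathbb{C}[R_1,R_2,G_1^{\mathfrak{gl}(n)},\dots,G_n^{\mathfrak{gl}(n)}]\otimes\mathbb{C}[T,T^{-1}]$ inside the target algebra, cleanly executed by tracking powers of $T$ throughout. I would verify the base case $k=1$ (where the formula collapses to $\mathcal{E}\otimes 1+(n+1)R_1+1\otimes G_1^{\mathfrak{gl}(n)}$, which vanishes by the definition of $R_1$) as a sanity check, and then induct on $k$ via the recursive form of Newton's identity in order to avoid having to invert the full system at once.
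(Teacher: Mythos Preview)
Your proposal is correct and follows essentially the same approach as the paper: apply $\rho$ to the noncommutative Newton formula for $\mathfrak{gl}(n+1)$, use Theorem~\ref{rho-capelli} to rewrite $C_\rho$ in terms of $C_n$ (the paper packages this as the ratio identity $\frac{C_\rho(T-n-1)}{C_\rho(T-n)} = \bigl(1-\tfrac{1}{T-R_1-R_2}\bigr)\frac{C_n(T-R_1-n)}{C_n(T-R_1-n+1)}$), then feed in the Newton formula for $\mathfrak{gl}(n)$ on the right and compare coefficients of $T^{-k-1}$. The only cosmetic difference is that the paper extracts the coefficients directly rather than organizing the bookkeeping as an induction on $k$.
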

\begin{proof}
Let us first recall the following identity:
$$
\frac{C_{N}(T-N+1) - C_{N}(T-N)}{C_N(T-N+1)} = \sum_{k=0}^{\infty}{G_k^{\mathfrak{gl}(N)} T^{-1-k} }. 
$$
This is sometimes referred as the noncommutative analogue of the classical Newton formula; see Theorem 4 in \cite{U} (see also Theorem 7.1.3 in \cite{M}). Note that this identity should be considered over the ring of formal Laurent series with coefficients in $Z(\mathfrak{gl}(N))$.

On the other hand, Theorem \ref{rho-capelli} implies that 
\begin{equation} \label{eq-rho-newton}
    \frac{C_{\rho} (T-n-1)}{C_{\rho}(T - n)} = \left( 1 - \frac{1}{T-R_1-R_2}\right)\frac{C_n (T-R_1-n)}{C_n (T- R_1-n+1)}
\end{equation}
in $\mathbb C[\mathcal E] \otimes Z(\mathfrak{gl} (n))$. Applying $\rho$ to the Newton formula for $N=n+1$, we obtain
$$
 \frac{C_{\rho} (T-n-1)}{C_{\rho}(T-n)} = 1 - \sum_{k\geq 0} \rho (G_k^{\mathfrak{gl}(n+1)}) T^{-1-k} = 1 - (n+1)T^{-1} - T^{-1}\sum_{k\geq 1} \rho (G_k^{\mathfrak{gl}(n+1)}) T^{-k}.
$$
(recall that we identify $\rho(T)$ with $T$). Analogously, we can express the right hand side of \eqref{eq-rho-newton} as a power series in $T^{-1}$. We complete the proof by comparing and computing the coefficients of $T^{-k-1}$ on both sides. \end{proof}

\begin{example}
Theorem \ref{thm-rho-g} applied for $k=3$ gives the following formula:
\begin{eqnarray*}
\rho(G_3^{\mathfrak{gl}(n+1)})&=&(R_2^2+3R_1R_2+3R_1^2)(\mathcal{E} \otimes 1)+(n+1)R_1^3+(1\otimes G_3^{\mathfrak{gl}(n)}) + 3R_1(1\otimes G_2^{\mathfrak{gl}(n)}) \\
&& + 3R_1^2(1\otimes G_1^{\mathfrak{gl}(n)})-(1\otimes G_2^{\mathfrak{gl}(n)})-(R_2+3R_1)(1\otimes G_1^{\mathfrak{gl}(n)}).
\end{eqnarray*}
\end{example}

\section{Pseudo left inverse of $\rho$}

In this subsection we prove that the kernel of $\rho : U(\mathfrak{gl} (n+1)) \to \mathcal{D}'(n) \otimes U (\mathfrak{gl} (n))$ is $\left( G_1 \right)$ and define a family of homomorphisms $\sigma$, such that $\sigma(\rho (t)) - t \in G_1 U$ for all $t \in U$. Here and henceforth $U= U (\mathfrak{gl} (n+1))$ and $G_1 = G_1^{\mathfrak{gl}(n+1)} = \sum_{i=0}^{n} E_{ii}$. The map $\sigma$ is pseudo left inverse of $\rho$ in the sense that $\sigma\rho = \mbox{Id} \mod (G_1)$.

We first define the domain of $\sigma$. From now on, we set for simplicity $U= U (\mathfrak{gl} (n+1))$. Let $U'$ be the extension of $U$ defined by $U'= \\ U\langle X \rangle/\left( [U, X], C_{n+1}(X)\right) $. Equivalently, we define $U'$ by  considering a trivial central extension $\mathfrak{g}_{n+1}^X = \mathfrak{gl} (n+1) \oplus {\mathbb C}X$ of $\mathfrak{gl} (n+1)$, and letting  $U' = U(\mathfrak{g}_{n+1}^X)/ \left(  C_{n+1}(X) \right)$.
The following lemma is standard.
\begin{lemma} \label{unique-rep} 
Every element in $U'$ can be written uniquely in the form $u = \sum_{i=0}^n w_i X^i$ for some $w_i \in U$. In particular, $C_n(X)$ and $C_n(X+1)$ are nonzero in $U'$. \end{lemma}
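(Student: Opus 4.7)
The plan is to first identify $U(\mathfrak{g}_{n+1}^X)$ with the polynomial ring $U[X]$ in a central indeterminate $X$; this is immediate from the PBW theorem since $\mathfrak{g}_{n+1}^X = \mathfrak{gl}(n+1) \oplus \mathbb{C}X$ is a direct sum with $X$ central. Next I would inspect $C_{n+1}(X)$ as a polynomial in $X$. Only the diagonal entries of the defining matrix involve $X$, and each contributes $-X$ at the top, so $C_{n+1}(X)$ has degree exactly $n+1$ in $X$ with leading coefficient $(-1)^{n+1}$. Moreover, by the classical Capelli theorem (see \cite{M}), every coefficient of $C_{n+1}(X)$ lies in $Z(U)$, so $C_{n+1}(X)$ is a central element of $U[X]$.

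With this setup, existence of the normal form $u = \sum_{i=0}^n w_i X^i$ follows from a Euclidean division in $U[X]$. Given any $g(X) \in U[X]$ of $X$-degree $d \geq n+1$ with leading coefficient $g_d \in U$, subtracting $(-1)^{n+1} g_d X^{d-n-1} C_{n+1}(X)$ reduces the $X$-degree by at least one; iterating produces a remainder of $X$-degree at most $n$. For uniqueness, suppose $\sum_{i=0}^n w_i X^i$ lies in the two-sided ideal $(C_{n+1}(X))$. Because $C_{n+1}(X)$ is central in $U[X]$, this ideal coincides with the principal ideal $U[X] \cdot C_{n+1}(X)$, so $\sum_{i=0}^n w_i X^i = h(X)\, C_{n+1}(X)$ for some $h(X) \in U[X]$. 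Comparing top $X$-coefficients, and using that the leading coefficient of $C_{n+1}(X)$ is the invertible scalar $(-1)^{n+1}$, forces $h(X) = 0$, and therefore $w_i = 0$ for all $i$.

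For the concluding assertion, $C_n(X)$ and $C_n(X+1)$ are each polynomials of $X$-degree exactly $n$ with leading coefficient $(-1)^n$, so in the normal form supplied by the first part they have $w_n = (-1)^n \neq 0$. By uniqueness they are nonzero in $U'$. The only step I view as substantive is the reduction of the two-sided ideal $(C_{n+1}(X))$ to the one-sided ideal $U[X]\cdot C_{n+1}(X)$; this relies on Capelli's centrality statement, and without it one would only have $\sum w_i X^i \in \sum_j a_j(X)\, C_{n+1}(X)\, b_j(X)$ and uniqueness could not be extracted by a degree comparison.
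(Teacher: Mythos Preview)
Your argument is correct: identifying $U(\mathfrak g_{n+1}^X)$ with $U[X]$ via PBW, using that $C_{n+1}(X)$ is central with invertible leading coefficient $(-1)^{n+1}$, and then performing Euclidean division gives both existence and uniqueness of the normal form, from which the nonvanishing of $C_n(X)$ and $C_n(X+1)$ follows by reading off the coefficient of $X^n$. The paper itself gives no proof, recording the lemma as standard, so there is nothing further to compare.
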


\begin{lemma}
$U'$ is a domain.
\end{lemma}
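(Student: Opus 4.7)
The plan is to present $U'$ as a quotient of a polynomial ring over $U$ and then reduce to a statement about an associated graded ring. Since $\mathfrak{g}_{n+1}^X$ is a central extension, $U(\mathfrak{g}_{n+1}^X) \cong U[X]$, the polynomial algebra in a central variable $X$. The coefficients of $C_{n+1}(T)$ as a polynomial in $T$ are the Capelli generators, so $C_{n+1}(X) \in Z(U)[X]$ is central in $U[X]$, and $U' = U[X]/(C_{n+1}(X))$.

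Equip $U[X]$ with the PBW-type filtration $\deg(E_{ij}) = \deg(X) = 1$. Then $\operatorname{gr}(U[X]) = S(\mathfrak{gl}(n+1))[x] = \mathbb{C}[e_{ij}, x]$ is a polynomial ring, hence a domain. Each entry of the matrix whose column determinant is $C_{n+1}(X)$ has filtration degree $1$, so the symbol of $C_{n+1}(X)$ is the commutative column determinant $\det(e_{ij} - x\delta_{ij}) = (-1)^{n+1}\det(xI - E)$, i.e.\ up to a sign the characteristic polynomial of the generic matrix $E = (e_{ij})$. Since $\operatorname{gr}(U[X])$ is a domain and $C_{n+1}(X)$ is central, for every nonzero $g \in U[X]$ the symbol of $g\cdot C_{n+1}(X)$ equals $\sigma(g)\sigma(C_{n+1}(X))$; hence the associated graded of the principal ideal $(C_{n+1}(X))$ is generated by $\sigma(C_{n+1}(X))$, and $\operatorname{gr}(U') = \mathbb{C}[e_{ij}, x] / (\det(xI - E))$.

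The core of the argument is then to show $\det(xI - E)$ is irreducible in the UFD $\mathbb{C}[e_{ij}, x]$. Its coefficients with respect to $x$ lie in the $GL(n+1)$-conjugation-invariant subring $\mathbb{C}[c_1, \dots, c_{n+1}]$ of characteristic coefficients, over which $\det(xI - E)$ is the generic polynomial of degree $n+1$, irreducible with Galois group $S_{n+1}$. If $\det(xI - E) = p_1 p_2$ with monic coprime factors in $\mathbb{C}[e_{ij}, x]$, applying the $GL(n+1)$-conjugation action and uniqueness of factorization in the UFD forces each $p_i$ to be semi-invariant for some character $\det^k$ of $GL(n+1)$; the triviality of the action of scalar matrices $\lambda I$ then forces $k = 0$, whence $p_1, p_2 \in \mathbb{C}[c_1, \dots, c_{n+1}][x]$, contradicting irreducibility there. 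Thus $\operatorname{gr}(U')$ is $\mathbb{C}[e_{ij}, x]$ modulo a prime ideal, hence a domain, and $U'$ is a domain by the standard fact that a filtered ring whose associated graded is a domain is itself a domain. The main obstacle is this irreducibility step; the cleanest packaging is the conjugation-invariance reduction to the classical result on the generic polynomial of degree $n+1$.
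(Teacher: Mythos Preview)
Your overall architecture matches the paper's: present $U'$ as $U[X]/(C_{n+1}(X))$, pass to the associated graded with the PBW filtration giving $\deg E_{ij}=\deg X=1$, identify $\operatorname{gr}U'$ with $\mathbb{C}[e_{ij},x]/(\det(e_{ij}-x\delta_{ij}))$, and then prove that the characteristic polynomial of the generic matrix is irreducible. Your treatment of $\operatorname{gr}$ of the principal ideal is fine (and in fact the centrality hypothesis is not even needed once $\operatorname{gr}U[X]$ is a domain).

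Where you diverge from the paper is in the irreducibility argument. The paper argues via the \emph{constant term}: writing $C_{n+1}(-X)=X^{n+1}+c_1X^{n}+\cdots+c_{n+1}$ in $\operatorname{gr}U[X]$, any factorization into monic homogeneous pieces gives $a_kb_m=c_{n+1}=\det(e_{ij})$; since the determinantal ideal $(\det(e_{ij}))$ is prime, $c_{n+1}$ is irreducible in the UFD $\operatorname{gr}U$, so one of $a_k,b_m$ is a unit, and homogeneity forces the corresponding factor to have degree~$0$ in $X$. This is short and uses only one classical input. Your route instead exploits the $GL(n{+}1)$-conjugation action: factors of the invariant $\det(xI-E)$ must be (semi-)invariant, scalars kill the character, hence factors lie in $\mathbb{C}[c_1,\dots,c_{n+1}][x]$, where the generic degree-$(n{+}1)$ polynomial is irreducible. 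Both are valid; the paper's is more economical, while yours makes the symmetry explicit and reduces to a different classical fact.

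One point to tighten: from ``unique factorization'' alone you cannot conclude that a pair of monic coprime factors is individually semi-invariant --- a priori the group could redistribute irreducible factors among $p_1$ and $p_2$. The missing word is \emph{connectedness}: $GL(n{+}1)$ acts algebraically on the finite set of monic irreducible factors of $\det(xI-E)$, and a connected group cannot permute a finite set nontrivially; hence each irreducible factor is genuinely invariant (not just semi-invariant up to swapping). With that said, your argument goes through.
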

\begin{proof}
Let $U[X] =  U(\mathfrak{g}_{n+1}^X)$.  To prove that the ring $U' = U[X]/ \left(  C_{n+1}(X) \right)$ is a domain, it is enough to show that  the associated graded ring ${\rm gr}\, U'$ is a domain.  Let $I= \left(  C_{n+1}(X) \right)$. Then  we have that ${\rm gr}\, U' = {\rm gr}\, U[X] / {\rm gr}\, I$ (see for example, \S 2.3.10 in \cite{Dix}). Since ${\rm gr}\, U[X]$ is a polynomial ring, and hence, a unique factorization domain, it is enough to show that the polynomial $C_{n+1}(-X)$ is irreducible in ${\rm gr}\, U[X]$. 

Let  $C_{n+1}(-X) = X^{n+1} + c_1 X^{n}+\cdots + c_{n+1}$. We know that the center  $ Z(\mathfrak{gl} (n+1))$ of $ U(\mathfrak{gl} (n+1))$ is the polynomial algebra $\mathbb C [c_1,\dots ,c_{n+1}]$. Assume that $C_{n+1}(-X) = (X^k + \cdots + a_{k-1}X + a_k)(X^{m} + \cdots + b_{m-1}X + b_{m})$ for some $a_i,b_i$ in ${\rm gr}\, U$ of graded degree $i$. In particular, $a_k b_{m} = c_{n+1}$.
Note that $c_{n+1}$ is the determinant of ${\bf E}$ and hence the ideal generated by $c_{n+1}$ in ${\rm gr}\, U$ is a determinantal ideal. It is well-known that any such ideal is a prime ideal (equivalently, that ${\rm gr}\, U/\left( c_{n+1}\right)$ is an irreducible variety); see, for example, Proposition 1.1 in \cite{BV}. Therefore, $c_{n+1}$ is irreducible in ${\rm gr}\, U$. We thus may assume that  $a_k$ is a constant, and that $b_{m}$ is a constant multiple of $c_{n+1}$. Then  $k = 0$ and $m = n+1$, and hence, $C_{n+1}(-X)$ is irreducible. \end{proof}

Since $U'$ is a quotient of the universal enveloping algebra of $\mathfrak g_{n+1}^X$, $U'$ is a  right (and left) Noetherian domain. Then by a theorem of Goldie,  \cite{Gol}, $U'$ is also a right Ore domain. Let $U''$ denote the right quotient ring of $U'$, i.e. the  right Ore localization of $U'$ relative to all nonzero elements of $U'$. We have natural embeddings  $\iota:U \to U'$ and $\iota':U' \to U''$ that will allow us to consider the elements of $U$ as elements of $U'$, and those of $U'$ as elements of $U''$.  Denote by $Y$ the left and right inverse of $C_n(X)$ in $U''$, i.e. $YC_n(X) = C_n(X)Y = 1$.

\begin{lemma} \label{lem-y-com}
$Y$ commutes with $X$ and $E_{ij}$ whenever $i,j>0$.
\end{lemma}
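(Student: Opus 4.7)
The plan is to show that $C_n(X)$ itself commutes with $X$ and with $E_{ij}$ for $i,j>0$, and then invoke the general principle that the inverse of a central-in-the-relevant-subalgebra element inherits its commutation relations.

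First I would expand $C_n(X)$ as a polynomial in $X$:
$$
C_n(X) = \sum_{k=0}^{n} c_k\,X^{n-k},
$$
where the $c_k$ lie in the subalgebra $U(\mathfrak{gl}(n))\subset U$ generated by $E_{ij}$ with $i,j>0$. By the classical theorem of Capelli (applied to $\mathfrak{gl}(n)$, whose generators $E_{ij}$, $i,j>0$, are contained in $\mathfrak{gl}(n+1)$), each $c_k$ lies in $Z(\mathfrak{gl}(n))$ and hence commutes with every $E_{ij}$ for $i,j>0$. Since $X$ is central in $U'$ by construction (the defining relations include $[U,X]=0$), it commutes with $c_k$ and with every $E_{ij}$. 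Consequently $C_n(X)$ commutes with $X$ and with every $E_{ij}$ for $i,j>0$.

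Next I would use the following standard fact: in any associative ring, if an element $z$ has a two-sided inverse $z^{-1}$ and $az=za$ for some $a$, then $az^{-1}=z^{-1}a$. Indeed, multiplying $az=za$ on the left and right by $z^{-1}$ gives $z^{-1}a = az^{-1}$. Applying this fact in $U''$ with $z=C_n(X)$ and $z^{-1}=Y$, and letting $a$ range over $X$ and the $E_{ij}$ with $i,j>0$, yields $[Y,X]=0$ and $[Y,E_{ij}]=0$ for $i,j>0$, as desired.

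There is no real obstacle here: the only substantive input is the centrality of the Capelli determinant of $\mathfrak{gl}(n)$ in $U(\mathfrak{gl}(n))$, which is classical and has already been used implicitly in Section 5. The remainder is the formal manipulation of two-sided inverses in the Ore localization $U''$.
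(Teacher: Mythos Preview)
Your proof is correct and follows exactly the same approach as the paper's: both argue that $C_n(X)$ commutes with $X$ and with $E_{ij}$ for $i,j>0$, and then use the elementary fact that a two-sided inverse commutes with anything commuting with the original element. The paper's proof is simply a one-line sketch of this, while you have spelled out the details.
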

\begin{proof}
To prove the result we observe that $Y$ commutes with all elements that commute with $C_n(X)$.
\end{proof}
In order to define the  homomorphism  $\sigma: \mathcal{D}'(n) \otimes U (\mathfrak{gl} (n)) \to U''$, we next introduce some distinguished elements of $U''$. We treat again $T$ as a formal variable that commute with all $E_{ij}$. We first define the $n \times n$ matrix $M_n(T)$, whose $(i,j)$th entry is 
$$
M_n(T)_{ij} = \det \left[ \begin{matrix} E_{11} - T -1 & E_{12} & \cdots &0& \cdots& E_{1n} \\ E_{21} & E_{22} - T-2 & \cdots &0& \cdots& E_{2n} \\
\vdots & \vdots & \ddots & 0 & \ddots & \vdots\\
E_{i1} & E_{i2} & \cdots & 1 & \cdots & E_{in} \\
\vdots & \vdots & \ddots & 0 & \ddots & \vdots\\
 E_{n1} & E_{n2} & \cdots &0& \cdots& E_{nn} - T - n+1 
\end{matrix} \right] 
$$
where the entry of $1$ in the determinant above is the $(i,j)$th entry, while the $(k,k)$th entry equals $E_{kk}-T - k$ for $0 < k < j$ and $E_{kk}-T - k+1$ for $k > j$. 

Similarly, we define
$$
M_{n+1}(T)_{ij} = \det \left[ \begin{matrix} E_{00} - T -1 & E_{01} & \cdots &0& \cdots& E_{0n} \\ E_{10} & E_{11} - T-2 & \cdots &0& \cdots& E_{1n} \\
\vdots & \vdots & \ddots & 0 & \ddots & \vdots\\
E_{i0} & E_{i1} & \cdots & 1 & \cdots & E_{in} \\
\vdots & \vdots & \ddots & 0 & \ddots & \vdots\\
 E_{n0} & E_{n1} & \cdots &0& \cdots& E_{nn} - T - n 
\end{matrix} \right].
$$

The next lemma is standard but for reader's convenience we include a short proof. We will use the result both for $\mathfrak{gl}(n)$ and $\mathfrak{gl}(n+1)$.
\begin{lemma}
\label{matrix-comm}Let $V$ and $W$ be $n \times n$ matrices with entries in $U(\mathfrak{gl}(n))[T]$ and let $P$ be a nonzero element of $Z(\mathfrak{gl}(n))[T]$ such that $VW=PI_n$. Then $WV=PI_n$.
\end{lemma}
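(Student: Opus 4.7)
The plan is to reduce to matrix algebra over a skew field, where one-sided inverses are two-sided. First I would recall that $U(\mathfrak{gl}(n))$ is a Noetherian domain (immediate from PBW, since the associated graded is a polynomial ring), so $A := U(\mathfrak{gl}(n))[T]$ is also a Noetherian domain. By Goldie's theorem (the same result already invoked earlier to construct $U''$), $A$ is an Ore domain and has a classical skew field of fractions, which I will call $K$. The inclusion $A \hookrightarrow K$ is injective, and it induces an injective ring homomorphism $M_n(A) \hookrightarrow M_n(K)$; so it is enough to prove $WV = PI_n$ inside $M_n(K)$.

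Next I would use that $P \in Z(\mathfrak{gl}(n))[T]$ is a nonzero central element of $A$, and central elements of $A$ stay central in $K$ (the Ore condition is automatic when multiplying by a central regular element). Hence $P$ is a central unit in $K$, with central inverse $P^{-1}$. Rewriting the hypothesis $VW = PI_n$ in $M_n(K)$ and multiplying on the right by $P^{-1}I_n$ gives
\[
V \cdot (P^{-1}W) = I_n,
\]
so $P^{-1}W$ is a right inverse of $V$ in $M_n(K)$.

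The key step is now the classical fact that in $M_n(K)$, for $K$ a skew field, a one-sided inverse is automatically a two-sided inverse. This is standard: viewing $V$ as a $K$-linear endomorphism of the right (or left) $K$-module $K^n$, the existence of a right inverse says $V$ is surjective, and a surjective endomorphism of a finite-dimensional vector space over a division ring is injective, hence invertible. Thus $(P^{-1}W)V = I_n$ in $M_n(K)$, which (by centrality of $P$) rearranges to $WV = PI_n$. Pulling back through the injection $M_n(A) \hookrightarrow M_n(K)$ completes the proof.

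The only potentially delicate point is the passage to the skew field of fractions — specifically, checking that $A$ is Ore and that the central element $P$ remains central after localization — but both are immediate from Goldie's theorem and the general fact about localizing at a regular central element. Everything else is formal.
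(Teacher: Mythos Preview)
Your proof is correct, and the overall strategy---invert $P$ by localizing, then use that a one-sided inverse in the resulting matrix ring is two-sided---is the same as the paper's. The implementations differ, though: the paper localizes only at the central multiplicative set generated by $P$, obtaining a Noetherian ring $L$, and then invokes Jacobson's theorem that in a Noetherian ring any right-invertible element is invertible (applied inside $M_n(L)$). You instead pass all the way to the Ore skew field of fractions $K$ via Goldie's theorem and use the more elementary linear-algebra fact that a surjective endomorphism of a finite-dimensional vector space over a division ring is bijective. Your route trades a smaller localization for a more classical endgame; the paper's route keeps the localization minimal but needs the Noetherian one-sided-inverse fact. Either choice is fine here.
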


\begin{proof}
Since $P$ is central in $U(\mathfrak{gl}(n))[T]$, we can localize $U(\mathfrak{gl}(n))[T]$ relative to its multiplicative subset generated by $P$. Denote by $L$ the corresponding localized ring. Let $V'=P^{-1}V$. We then have $n \times n$ matrices $V'$ and $W$ with entries in $L$ such that $V'W=I_n$. We next use that  $U(\mathfrak{gl}(n))[T]$ is Noetherian, so $L$ is Noetherian, and hence the ring of $n \times n$ matrices with entries in $L$ is Noetherian as well. And since $V'W=I_n$ in the Noetherian ring $L$, we have $WV'=I_n$, \cite{Jac}. Thus $WV=PI_n$, as desired. \end{proof}

\begin{lemma}
\label{matrix-inv-n} The following identities hold:
$$({\bf E}_n^t-T)M_n(T)= M_n(T)({\bf E}_n^t-T)= C_{n}(T); \; M_n(T)^t ({\bf E}_n-T-n) = ({\bf E}_n-T-n) M_n(T)^t= C_{n}(T+1).$$
Also, $({\bf E}_{n+1}^t-T)M_{n+1}(T)= C_{n+1}(T)$, etc. \end{lemma}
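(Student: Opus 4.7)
The four identities are the noncommutative adjugate identity $A\cdot\mathrm{adj}(A)=\mathrm{adj}(A)\cdot A=\det(A)\cdot I$ in disguise: the matrix $M_n(T)$ is built column-by-column to play the role of the adjugate of the Capelli matrix, and the prescribed diagonal shifts are precisely those needed to absorb the $\mathfrak{gl}(n)$-commutators that arise. My plan is to treat the four identities as two pairs, and to handle the $\mathfrak{gl}(n+1)$-case identically to the $\mathfrak{gl}(n)$-case.

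For the first pair I would first prove $({\bf E}_n^t - T) M_n(T) = C_n(T) I_n$ by direct expansion. The $(i,j)$-entry of the product equals
$$
\sum_{k=1}^n (E_{ki}-T\delta_{ki})\, M_n(T)_{kj},
$$
and using the defining sum-over-permutations formula for each column determinant $M_n(T)_{kj}$, this can be reorganized into a signed sum of monomials in the $E_{ab}$'s. For $i=j$ a direct comparison with the definition of $C_n(T)$ gives the desired value. For $i\neq j$ the same sum is formally the column determinant of a matrix with ``two equal columns'' in the Capelli sense, and a telescoping argument using the commutation relation $[E_{ab},E_{cd}]=\delta_{bc}E_{ad}-\delta_{ad}E_{cb}$ shows that this column determinant vanishes. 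The companion identity $M_n(T)({\bf E}_n^t - T) = C_n(T) I_n$ then follows immediately from Lemma \ref{matrix-comm}, since $C_n(T)\in Z(\mathfrak{gl}(n))[T]$.

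The second pair is handled by the same template. The $(i,j)$-entry of $({\bf E}_n - T - n) M_n(T)^t$ equals $\sum_k (E_{ik}-(T+n)\delta_{ik})M_n(T)_{jk}$; an analogous expansion, this time corresponding to the shifted Capelli template with diagonal $T+1,\dots,T+n$, yields $C_n(T+1)\delta_{ij}$, and a second application of Lemma \ref{matrix-comm} closes the pair. The $\mathfrak{gl}(n+1)$ statements are proved by the same argument with $n$ replaced by $n+1$ and the index range shifted to $\{0,\dots,n\}$. I expect the main obstacle to be the noncommutative vanishing for $i\neq j$: classically ``two equal columns'' give zero for free, but here each permutation summand must be reordered past the Capelli diagonal shifts, and the combinatorial bookkeeping -- matching each commutator correction against the shift adjustment $T+k \to T+k-1$ on the diagonals past the expanded column -- is the one genuinely delicate step; it is the same combinatorial mechanism that makes $C_n(T)$ central in $U(\mathfrak{gl}(n))$ in the first place, so in practice one can either carry out the cancellation directly or invoke the standard Capelli lemma.
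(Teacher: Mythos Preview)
Your outline is correct and follows the same overall strategy as the paper: establish one of each pair of identities by a direct computation, then deduce the partner identity from Lemma~\ref{matrix-comm}, and treat the $(n{+}1)$-case identically.

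The difference is in how the ``genuinely delicate'' cancellation for $i\neq j$ is organized. Rather than raw telescoping on the commutators, the paper works in $\Lambda_n\otimes U(\mathfrak{gl}(n))$ following~\cite{U}: with $F_m=\sum_l E_{lm}e_l$ one has
\[
M_n(T)_{kj}\,e_1\cdots e_n=(F_1-(T{+}1)e_1)\cdots e_k\cdots (F_n-(T{+}n{-}1)e_n),
\]
and the single swap identity $(F_r-(T{+}l)e_r)(F_s-(T{+}l{+}1)e_s)=-(F_s-(T{+}l)e_s)(F_r-(T{+}l{+}1)e_r)$ encodes exactly the commutator-plus-shift bookkeeping you describe. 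Repeated swaps then give the diagonal case $C_n(T)$ directly and, for $i\neq j$, force two adjacent factors with the same $F_r$ and consecutive shifts, which vanish by the $r=s$ case of the swap identity. For the second pair the paper simply cites Proposition~2 of~\cite{U} for $M_n(T)^t(\mathbf{E}_n-T-n)=C_n(T+1)$ rather than redoing the expansion. Your brute-force route would succeed, but the exterior-algebra packaging is what turns the ``delicate step'' you flagged into a two-line swap lemma; it is worth adopting if you want to actually write the proof out.
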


\begin{proof}
First we show $({\bf E}_n^t-T)M_n(T)=C_n(T)$, or, equivalently, $\sum_k (E_{ki}-\delta_{ki}T) M_n(T)_{kj} = \delta_{ij}C_n(T)$. We adopt the reasoning used in Section 1 of \cite{U}. For this we work over the algebra $\Lambda_n \otimes U(\mathfrak{gl}(n))$, where  $\Lambda_n$ is   the exterior algebra with generators $e_1, \dots, e_n$.  Let $F_m=\sum_{l}E_{lm}e_l$. Then 
$$M_n(T)_{kj}e_1\dots e_n = (F_1 - (T+1)e_1) \dots e_k \dots (F_n - (T+n-1)e_n),
$$ where $e_k$ is the $j$th term in the product. We have that \begin{eqnarray*} \sum_k (E_{ki}-\delta_{ki}T) M_n(T)_{kj} e_1\dots e_n  & = & { (-1)^{j-1}\left(\sum_k(E_{ki}-\delta_{ki}T)e_k\right)(F_1 - (T+1)e_1) \dots} \\ &&   {F_{j-1}-(T+j-1)e_{j-1}) ((F_{j+1}-(T+j)e_j) \dots  (F_n - (T+n-1)e_n)} \\
& = &  (-1)^{j-1}(F_i-Te_i)(F_1 - (T+1)e_1) \dots  (F_{j-1}-(T+j-1)e_{j-1})\\ &&(F_{j+1}-(T+j)e_j) \dots (F_n - (T+n-1)e_n).
\end{eqnarray*} 

We next prove an identity analogous to Lemma 1 in \cite{U}. Specifically, we claim that 
\begin{equation}\label{second-paragraph}
(F_r-(T+l)e_r)(F_s-(T+l+1)e_s)=-(F_s-(T+l)e_s)(F_r-(T+l+1)e_r),
\end{equation}
or, equivalently, 
\begin{eqnarray*} 
\sum_{p,q} (E_{pr}e_p - (T+l)e_r)(E_{qs}e_q - (T+l+1)e_s) + \sum_{p,q} (E_{qs}e_q - (T+l)e_s)(E_{pr}e_p - (T+l+1)e_r) = 0.
\end{eqnarray*} 
{The left hand side equals  
\begin{eqnarray*} & \; & \sum_{p,q}[E_{pr},E_{qs}]e_pe_q + \sum_q E_{qs}e_re_q + \sum_p E_{pr}e_se_p \\
&&= \sum_{p,q} \delta_{qr} E_{ps}e_pe_q - \sum_{p,q} \delta_{ps}E_{qr}e_pe_q + \sum_q E_{qs}e_re_q + \sum_p E_{pr}e_se_p \\
&&= \left(\sum_p E_{ps}e_pe_r + \sum_q E_{qs} e_re_q\right) + \left(\sum_p E_{pr}e_se_p - \sum_q E_{qr}e_se_q\right) = 0,
\end{eqnarray*} 
as claimed.}
Now, applying \eqref{second-paragraph} repeatedly, we obtain $$\sum_k (E_{ki}-\delta_{ki}T) M_n(T)_{kj} e_1\dots e_n = (F_1 - Te_1) \dots (F_i-(T+j-1)e_i) \dots (F_n - (T+n-1)e_n),$$
where the $F_i-(T+j-1)e_i$ is the $j$th term. If $j=i$, this yields the desired. Otherwise, note that if $r=s$ in  \eqref{second-paragraph} then $(F_r-(T+l)e_r)(F_r-(T+l+1)e_r)=0$. Applying \eqref{second-paragraph} repeatedly and combining it with the last identity (considering separately the cases $i<j$ and $i>j$), we obtain
$$(F_1 - Te_1) \dots (F_i-(T+j-1)e_i) \dots (F_n - (T+n-1)e_n)=0.$$ Thus, $({\bf E}_n^t-T)M_n(T)=C_n(T)$, as desired.

The identity $M_n(T)^t(\mathbf{E}_n-T-n)=C_n(T+1)$ follows directly from Proposition 2 of \cite{U}. The remaining statements for the $n \times n$ matrices follow from Lemma \ref{matrix-comm}.  The proofs of the statements involving the $(n+1) \times (n+1)$ matrices are analogous.
\end{proof}

We next define some elements in $U''$ that will also be used to define the pseudo-left inverse $\sigma$ of $\rho$. Let $u_0=1$, and let $u_1,\dots ,u_n$ be defined via the matrix equation
$$
[u_1, u_2,\dots ,u_n] = -[E_{10}, E_{20},\dots ,E_{n0}]M_n(X)Y.
$$
Some properties of $u_i$ are listed in the following lemmas. 

\begin{lemma} \label{lemma-u-e-x} $\sum_{a=0}^n u_aE_{ia}=u_iX$  for $i=0,1,\dots ,n$.
\end{lemma}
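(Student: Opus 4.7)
The proposed identity, as $i$ ranges over $0,1,\ldots,n$, is the coordinate-wise statement of the single row-vector equation $V(\mathbf{E}_{n+1}^t - X) = 0$ in $(U'')^{n+1}$, where $V := (1,u_1,\ldots,u_n)$ and the $i$-th component reads $\sum_{a} u_a E_{ia} - u_i X$. My plan is to handle the $n$ coordinates $i\ge 1$ directly from the definition of the $u_a$, and then to obtain the remaining coordinate $i=0$ from a uniqueness argument built on the $\mathfrak{gl}(n+1)$ Capelli identity $C_{n+1}(X)=0$ in $U'$.

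For $i\ge 1$ the equations read $u(\mathbf{E}_n^t - X) = -\mathbf{E}_0$, where $u=(u_1,\ldots,u_n)$ and $\mathbf{E}_0=(E_{10},\ldots,E_{n0})$. Substituting the defining formula $u = -\mathbf{E}_0 M_n(X) Y$, commuting $Y$ past $\mathbf{E}_n^t - X$ (allowed by Lemma \ref{lem-y-com}), and applying Lemma \ref{matrix-inv-n} in the form $M_n(X)(\mathbf{E}_n^t-X)=C_n(X)I_n$ together with $YC_n(X)=1$ yields the identity in one line. The same computation, read in reverse, establishes a uniqueness statement that I will use below: any $(1,V_1,\ldots,V_n)\in (U'')^{n+1}$ satisfying the $i\ge 1$ equations must have $V_a=u_a$, because those equations force $(V_1,\ldots,V_n)\,C_n(X) = -\mathbf{E}_0 M_n(X)$, and $C_n(X)$ is invertible in $U''$.

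For $i=0$ I would \emph{not} verify the equation directly — the obstruction is that $Y$ does not commute with the generators $E_{0a}$ — but instead exhibit a second left null vector of $\mathbf{E}_{n+1}^t-X$ with first coordinate $1$ and invoke the uniqueness above. Applying Lemma \ref{matrix-inv-n} in dimension $n+1$ gives $M_{n+1}(X)(\mathbf{E}_{n+1}^t - X) = C_{n+1}(X)I_{n+1}$, which vanishes in $U'$ because $C_{n+1}(X)=0$ there by the very definition of $U'$. Hence the zeroth row $W := (M_{n+1}(X)_{00}, \ldots, M_{n+1}(X)_{0n})$ of $M_{n+1}(X)$ is a left null vector. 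A short column-$0$ expansion identifies $W_0 = M_{n+1}(X)_{00}$ with $C_n(X+1)$, which is nonzero and hence invertible in $U''$ by Lemma \ref{unique-rep}. Setting $\tilde V := C_n(X+1)^{-1} W$ and pulling this scalar through componentwise (via $(C_n(X+1)^{-1} W_a)A_{ai} = C_n(X+1)^{-1}(W_a A_{ai})$) yields $\tilde V(\mathbf{E}_{n+1}^t-X)=0$ and $\tilde V_0 = 1$; the uniqueness from the previous paragraph then forces $\tilde V = V$, so $V(\mathbf{E}_{n+1}^t - X)=0$ in every coordinate, and the $i=0$ component unpacks to $E_{00}+\sum_{a\ge 1}u_a E_{0a}=X$, i.e. $\sum_{a=0}^{n} u_a E_{0a}=u_0 X$.

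The step I expect to require the most care is the identification $M_{n+1}(X)_{00}=C_n(X+1)$. The diagonal-shift conventions in the definitions of $M_n(T)$ and $M_{n+1}(T)$ are offset by one, so one has to verify that the minor of $M_{n+1}(X)_{00}$ obtained by deleting its zeroth row and column — whose diagonal entries work out to $E_{kk}-X-k$ for $k=1,\ldots,n$ — really is $C_n(T+1)$ evaluated at $T=X$, whose diagonal entries are $E_{kk}-(X+1)-(k-1)=E_{kk}-X-k$.
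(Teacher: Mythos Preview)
Your proof is correct and shares the paper's overall architecture: the $i\ge 1$ cases are handled identically, and both arguments for $i=0$ hinge on the relation $C_{n+1}(X)=0$ in $U'$ together with the identification $M_{n+1}(X)_{00}=C_n(X+1)$ (which is nonzero by Lemma~\ref{unique-rep}). The organization of the $i=0$ step differs slightly. The paper writes $[u_0,\ldots,u_n](\mathbf{E}_{n+1}^t-X)=[q,0,\ldots,0]$ with $q=\sum_a u_aE_{0a}-X$, right-multiplies by $M_{n+1}(X)$, and reads off $qC_n(X+1)=0$ from the $(0,0)$ entry to conclude $q=0$ in the domain $U''$. You instead left-multiply, taking the zeroth row $W$ of $M_{n+1}(X)$ as a left null vector of $\mathbf{E}_{n+1}^t-X$, normalize by $C_n(X+1)^{-1}$, and invoke a uniqueness statement (essentially the special case $v_0=1$ of the paper's subsequent Lemma~\ref{lemma-v-e-x}) to identify the result with $(1,u_1,\ldots,u_n)$. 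The paper's version is marginally more economical since it avoids introducing a second null vector and the uniqueness detour; your version has the virtue of making the uniqueness principle explicit up front, which the paper then formalizes separately.
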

\begin{proof} 
We have $[u_1, u_2,\dots ,u_n]({\bf E}_n^t-X)=-[E_{10}, E_{20},\dots ,E_{n0}]M_n(X)Y({\bf E}_n^t-X)$. By Lemma \ref{lem-y-com}, $Y$ commutes with ${\bf E}_n^t$ and with $X$ and thus with ${\bf E}_n^t-X$. Hence, 
$$M_n(X)Y({\bf E}_n^t-X)=M_n(X)({\bf E}_n^t-X)Y=C_n(X)Y=I_n$$ by Lemma \ref{matrix-inv-n}. Thus, $[u_1, u_2,\dots ,u_n]({\bf E}_n^t-X)=-[E_{10}, E_{20},\dots ,E_{n0}]$. Since $u_0=1$, this yields the desired result for $i=1,2,\dots ,n$. It remains to prove it for $i=0$.

We  have $[u_0, u_1, \dots, u_n]({\bf E}_{n+1}^t-X)=[q, 0, 0, \dots 0]$ for $q=\sum_{a=0}^n u_aE_{0a}-u_0X$, and need to verify that $q=0$. After multiplying by $M_{n+1}(X)$, we obtain 
$$0=[u_0, u_1, \dots, u_n]C_{n+1}(X)=[u_0, u_1, \dots, u_n]({\bf E}_{n+1}^t-X)M_{n+1}(X)=[q, 0, \dots, 0]M_{n+1}(X)$$ by Lemma \ref{matrix-inv-n} and using that $C_{n+1}(X)=0$. Thus, $q$ multiplied by any entry of the leftmost column of $M_{n+1}(X)$ equals $0$. In particular, $qM_{n+1}(X)_{00}=0$. This implies $q=0$ because $M_{n+1}(X)_{00}=C_n(X+1)$, because $C_n(X+1) \neq 0$ from Lemma \ref{unique-rep}, and because $U''$ is a domain. This completes the proof of the Lemma.
\end{proof}

\begin{lemma} \label{lemma-v-e-x} If $v_0,v_1,\dots ,v_n$ are such that $\sum_{a=0}^n v_aE_{ia}=v_iX$  for $i=0,1,\dots ,n$, then $v_i=v_0u_i$ for $i=0,1,\dots ,n$.
\end{lemma}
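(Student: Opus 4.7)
The plan is to mimic the strategy used in the proof of Lemma \ref{lemma-u-e-x}: recast the hypothesis as a matrix identity and invert ${\bf E}_n^t - X$ using $M_n(X)$ and $Y$. Since $u_0 = 1$, the claim for $i = 0$ is automatic, so the content of the lemma concerns $i = 1, \dots, n$.

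First I would separate out the $a = 0$ term in the hypothesis. For each $i \in \{1, \dots, n\}$, the equation $\sum_{a=0}^n v_a E_{ia} = v_i X$ rearranges as $\sum_{a=1}^n v_a E_{ia} - v_i X = -v_0 E_{i0}$, which in matrix form reads
\[
[v_1, v_2, \dots, v_n]\bigl({\bf E}_n^t - X\bigr) = -v_0 [E_{10}, E_{20}, \dots, E_{n0}].
\]

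Next I would multiply both sides on the right by $M_n(X) Y$. By Lemma \ref{lem-y-com}, $Y$ commutes with $X$ and with every $E_{ij}$ for $i,j > 0$, hence with every entry of $M_n(X)$ and with ${\bf E}_n^t - X$. Combined with Lemma \ref{matrix-inv-n}, this gives
\[
({\bf E}_n^t - X) M_n(X) Y = C_n(X) Y \cdot I_n = I_n,
\]
so the left-hand side collapses to $[v_1, \dots, v_n]$, while the right-hand side becomes $v_0 [u_1, u_2, \dots, u_n]$ by the very definition of the $u_i$. Comparing entries yields $v_i = v_0 u_i$ for $i = 1, \dots, n$, and the $i=0$ case is the trivial identity $v_0 = v_0 \cdot 1$.

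There is no serious obstacle here, but the one subtlety worth being careful about is noncommutativity: one must track that $v_0$ sits on the left throughout the calculation, that $Y$ (and thus the scalar $C_n(X) Y = 1$) may be freely moved past ${\bf E}_n^t - X$ and past each entry of $M_n(X)$, and that the products are read as row-times-matrix. Once these are observed the argument is a direct right-multiplication by the matrix inverse, exactly parallel to the step in Lemma \ref{lemma-u-e-x} that established the $i \geq 1$ relations for $u_i$ themselves.
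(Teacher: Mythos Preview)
Your argument is correct and essentially identical to the paper's: both rewrite the hypothesis as $[v_1,\dots,v_n]({\bf E}_n^t - X) = -v_0[E_{10},\dots,E_{n0}]$, right-multiply by $M_n(X)Y$, and invoke Lemma~\ref{matrix-inv-n} together with $C_n(X)Y = 1$ and the definition of the $u_i$. The only difference is that you spell out the use of Lemma~\ref{lem-y-com} and the trivial $i=0$ case a bit more explicitly.
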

\begin{proof}
We need to show $[v_1,\dots ,v_n]  = v_0[u_1,\dots ,u_n]$. The identities given in the statement imply that $[v_1, \dots, v_n] (\mathbf{E}_n^t - X) = -v_0[E_{10}, E_{20},\dots ,E_{n0}]$. We multiply this matrix identity by $M_n(X)Y$ on the right. Then, using  the definition of $u_1,\dots ,u_n$, Lemma \ref{matrix-inv-n}, and the identity $C_n(X)Y=1$, we obtain $[v_1,\dots ,v_n]  = v_0[u_1,\dots ,u_n]$ as needed.  \end{proof}

\begin{lemma} \label{u-e-commutator} $[u_i,E_{jk}]=\delta_{k0}u_ju_i-\delta_{ki}u_j$ for $i,j,k$ from $0$ to $n$.
\end{lemma}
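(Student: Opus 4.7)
\medskip
\noindent\textbf{Proof proposal.} The plan is to reduce the identity to an application of Lemma \ref{lemma-v-e-x}. For fixed $j,k$, define
$$
v_a := [u_a, E_{jk}] - \delta_{k0}\, u_j u_a + \delta_{ka}\, u_j, \qquad a = 0,1,\dots,n,
$$
so that the claimed identity is equivalent to $v_i = 0$ for all $i$. First, since $u_0 = 1$ commutes with $E_{jk}$, we have $v_0 = -\delta_{k0} u_j + \delta_{k0} u_j = 0$. It therefore suffices, by Lemma \ref{lemma-v-e-x}, to verify the system of identities
$$
\sum_{a=0}^n v_a E_{ia} = v_i X \qquad (i = 0, 1, \dots, n),
$$
since we would then conclude $v_i = v_0 u_i = 0$.

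To establish these identities, I would apply $[\cdot, E_{jk}]$ to the relation $\sum_{a=0}^n u_a E_{ia} = u_i X$ from Lemma \ref{lemma-u-e-x}. Using that $X$ commutes with every element of $U$ (in particular with $E_{jk}$), and applying the standard bracket $[E_{ia}, E_{jk}] = \delta_{aj} E_{ik} - \delta_{ki} E_{ja}$, one obtains
$$
\sum_{a=0}^n [u_a, E_{jk}]\, E_{ia} = [u_i, E_{jk}]\, X - u_j E_{ik} + \delta_{ki}\, u_j X,
$$
after using $\sum_a u_a E_{ja} = u_j X$ inside the second term. Substituting the definition of $v_a$ and simplifying with one more invocation of $\sum_a u_a E_{ia} = u_i X$ (for the term $\delta_{k0} u_j u_a$), the $u_j E_{ik}$ terms cancel and one is left precisely with $v_i X$. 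Thus Lemma \ref{lemma-v-e-x} applies and yields $v_i = 0$, which is the claim.

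The bookkeeping is routine once the correct definition of $v_a$ is found; the main conceptual step is recognizing that the Kronecker $\delta_{ki}$ on the right-hand side of the target identity must be promoted to a $\delta_{ka}$ inside the definition of $v_a$ so that the sum $\sum_a \delta_{ka} u_j E_{ia} = u_j E_{ik}$ produces exactly the correction term needed to cancel $-u_j E_{ik}$ coming from the bracket $[E_{ia}, E_{jk}]$. This is the only mildly delicate point; once the right $v_a$ is in hand, Lemma \ref{lemma-v-e-x} does the rest of the work.
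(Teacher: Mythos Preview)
Your proposal is correct and takes essentially the same approach as the paper: both apply $[\,\cdot\,,E_{jk}]$ to the relation of Lemma~\ref{lemma-u-e-x} and then invoke Lemma~\ref{lemma-v-e-x}. The only cosmetic difference is that the paper sets $v_i=[u_i,E_{jk}]+\delta_{ki}u_j$ (so $v_0=\delta_{k0}u_j$ and Lemma~\ref{lemma-v-e-x} gives $v_i=\delta_{k0}u_ju_i$ directly), whereas you subtract the extra $\delta_{k0}u_ju_a$ term up front to force $v_0=0$.
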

\begin{proof} Fix $j$ and $k$ and set $v_i=[u_i,E_{jk}]+\delta_{ki}u_j$ for $i=0,1,\dots ,n$. We will show that $v_0, \dots , v_n$ satisfy the hypothesis of Lemma \ref{lemma-v-e-x}.  Note first that  $v_0=[u_0,E_{jk}]+\delta_{k0}u_j=\delta_{k0}u_j$ as $u_0=1$. Now, for $i=0,1,\dots ,n$, by Lemma \ref{lemma-u-e-x} we have  that
\begin{eqnarray*}
\sum_{a=0}^n[u_a,E_{jk}]E_{ia}+\sum_{a=0}^n u_a(\delta_{aj}E_{ik}-\delta_{ki}E_{ja})& =& \sum_{a=0}^n ([u_a,E_{jk}]E_{ia}+u_a[E_{ia},E_{jk}])\\ & = & \sum_{a=0}^n [u_aE_{ia},E_{jk}]=[\sum_{a=0}^n u_aE_{ia},E_{jk}]\\ & = &[u_iX,E_{jk}]=[u_i,E_{jk}]X.
\end{eqnarray*}
On the other hand, 
$$ \sum_{a=0}^n u_a(\delta_{aj}E_{ik}-\delta_{ki}E_{ja})=\sum_{a=0}^n u_a\delta_{aj}E_{ik} - \sum_{a=0}^n u_a\delta_{ki}E_{ja} =u_jE_{ik}-\delta_{ki}u_jX$$
by Lemma \ref{lemma-u-e-x}. Thus $(\sum_{a=0}^n[u_a,E_{jk}]E_{ia})+u_jE_{ik}-\delta_{ki}u_jX=[u_i,E_{jk}]X$. Then 
$$\sum_{a=0}^n v_aE_{ia}=\left(\sum_{a=0}^n [u_a,E_{jk}]E_{ia}\right)+u_jE_{ik}=[u_i,E_{jk}]X+\delta_{ki}u_jX=v_iX.$$

Lemma \ref{lemma-v-e-x} implies that $v_i=v_0u_i=\delta_{k0}u_ju_i$. Thus $[u_i,E_{jk}]=v_i-\delta_{ki}u_j=\delta_{k0}u_ju_i-\delta_{ki}u_j,$ as claimed.
\end{proof}

\begin{lemma} \label{u-u-commute} $[u_i,u_j]=0$ for $0 \leq i,j \leq n$.
\end{lemma}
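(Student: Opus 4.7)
The plan mirrors the strategy of Lemmas \ref{lemma-u-e-x}--\ref{u-e-commutator}: fix $j \in \{0, 1, \dots, n\}$, set $v_a := [u_a, u_j]$ for $a = 0, 1, \dots, n$, and aim to show that the $v_a$ satisfy a linear system of the form $\sum_a v_a E_{ia} = v_i \cdot (\text{central shift})$, then invert a Capelli-type matrix in $U''$ to force all $v_a$ to vanish. Since $u_0 = 1$, the base case $v_0 = [1, u_j] = 0$ is automatic, so the proof will essentially be a version of Lemma \ref{lemma-v-e-x} with $X$ shifted by one.

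First I would compute $\sum_{a=0}^{n} v_a E_{ia}$. Expanding the commutator, invoking Lemma \ref{lemma-u-e-x} to replace $\sum_a u_a E_{ia}$ by $u_i X$, moving $u_j$ past each $E_{ia}$ via Lemma \ref{u-e-commutator}, and using that $X$ is central in $U''$, the sum should collapse to
\[
\sum_{a=0}^{n} v_a E_{ia} \;=\; v_i (X+1), \qquad i = 0, 1, \dots, n.
\]
The key cancellation is that the $\delta_{a0}$-contribution in Lemma \ref{u-e-commutator} yields $u_i u_j$, while the $\delta_{aj}$-contribution yields $-u_j u_i$, and together these recombine into the commutator $[u_i, u_j] = v_i$; it is this extra term that produces the ``$+1$'' shift relative to Lemma \ref{lemma-u-e-x}.

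Next, using $v_0 = 0$, the $i = 1, \dots, n$ equations can be packaged in matrix form as
\[
[v_1, \dots, v_n]\bigl(\mathbf{E}_n^{t} - (X+1) I_n\bigr) \;=\; 0.
\]
Substituting $T = X+1$ in Lemma \ref{matrix-inv-n} gives $(\mathbf{E}_n^{t} - (X+1)) M_n(X+1) = C_n(X+1) I_n$, so right-multiplication by $M_n(X+1)$ yields $v_i C_n(X+1) = 0$ for every $i$. Because $C_n(X+1) \neq 0$ in $U'$ by Lemma \ref{unique-rep} and $U''$ is obtained from $U'$ by inverting all nonzero elements, $C_n(X+1)$ is invertible in $U''$, and we conclude $v_i = 0$, i.e., $[u_i, u_j] = 0$ for all $i, j$.

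The main obstacle is the first step: the careful bookkeeping that forces the right-hand side to be exactly $v_i(X+1)$, with the correct ``$+1$'' shift and no leftover terms. Once that identity is in hand, the inversion argument is a direct transcription of the one used in Lemma \ref{lemma-v-e-x}, with $X$ replaced by $X+1$ throughout.
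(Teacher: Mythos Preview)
Your proposal is correct and follows essentially the same route as the paper: fix $j$, set $v_i=[u_i,u_j]$, derive $\sum_a v_a E_{ia}=v_i(X+1)$ from Lemmas \ref{lemma-u-e-x} and \ref{u-e-commutator}, then right-multiply the resulting matrix equation by $M_n(X+1)$ and cancel $C_n(X+1)$. Your sketch in fact supplies more detail than the paper on the cancellation producing the ``$+1$'' shift, and your use of invertibility of $C_n(X+1)$ in $U''$ is equivalent to the paper's appeal to $U''$ being a domain.
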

\begin{proof} Fix $j$ and let $v_i=[u_i,u_j]$. In particular, $v_0=0$.  Lemmas \ref{lemma-u-e-x} and \ref{u-e-commutator} imply  that  $\sum_{a=0}^n v_a E_{ia} = v_i(X+1)$ for all $i$. Since $v_0=0$, we can write the last set of identities in the following matrix form:  $[v_1, \dots ,v_n] (\mathbf{E}_n^t - (X+1)) = [0, \dots ,0]$. After  multiplying by $M_n(X+1)$ on the right and using Lemma \ref{matrix-inv-n}, we obtain $v_iC_n(X+1)=0$. Since $C_n(X+1) \neq 0$ (by Lemma \ref{unique-rep}) and $U''$ is a domain, $v_i=0$ for all $i=1, \dots ,n$. Thus we have the desired. 
\end{proof}

\begin{lemma} \label{lem-pi-g}
The correspondence $w \mapsto \pi_g(w)$, $w \in U$, $X \mapsto X - \frac{1}{n+1}G_1$ extends to a homomorphism $\pi_g': U' \to U'$ of associative unital algebras. Furthermore, $\ker \pi_g' = (G_1)$ in $U'$.
\end{lemma}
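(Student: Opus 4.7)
The plan is to extend the assignment on generators to a homomorphism out of the presentation $U' = U(\mathfrak{g}_{n+1}^X)/(C_{n+1}(X))$, verify it descends, and then identify the kernel using Lemma~\ref{unique-rep} together with the PBW structure of $U$.

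For well-definedness, I would first note that the assignment $E_{ab} \mapsto \pi_g(E_{ab})$, $X \mapsto X - G_1/(n+1)$ respects all defining relations of $U(\mathfrak{g}_{n+1}^X)$: the Lie brackets among the $E_{ab}$ because $\pi_g$ is already a Lie algebra map on $\mathfrak{gl}(n+1)$, and the centrality of $X$ because $G_1$ and $X$ are both central in $U'$. So there is an extension $\widetilde{\pi}_g : U(\mathfrak{g}_{n+1}^X) \to U'$. To descend it to $U'$, I need $\widetilde{\pi}_g(C_{n+1}(X)) = 0$; the point is that the diagonal shift $\pi_g(E_{ii}) = E_{ii} - G_1/(n+1)$ is precisely cancelled by the shift $X \mapsto X - G_1/(n+1)$, so $\widetilde{\pi}_g$ sends the Capelli matrix to itself entry-by-entry, and hence $\widetilde{\pi}_g(C_{n+1}(X)) = C_{n+1}(X) = 0$ in $U'$.

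For the kernel, the containment $(G_1) \subseteq \ker \pi_g'$ is immediate from $\pi_g(G_1) = 0$. Conversely, write $u \in \ker \pi_g'$ uniquely as $\sum_{i=0}^n w_i X^i$ via Lemma~\ref{unique-rep}, expand using that $G_1$ is central, and apply the uniqueness again to conclude that for each $j$ the coefficient $\sum_{i \geq j} \binom{i}{j} (-G_1/(n+1))^{i-j}\, \pi_g(w_i)$ of $X^j$ vanishes in $U$. The key input is $\ker \pi_g = (G_1)_U$, which follows from the central split $\mathfrak{gl}(n+1) = \mathfrak{sl}(n+1) \oplus \mathbb{C}G_1$ and PBW: the induced isomorphism $U \cong U(\mathfrak{sl}(n+1)) \otimes \mathbb{C}[G_1]$ identifies $\pi_g$ with projection sending $G_1 \mapsto 0$ on the polynomial factor. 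Separating $G_1$-degrees in each vanishing coefficient then forces $\pi_g(w_i) = 0$ for every $i$, so $w_i \in (G_1)_U$, and hence $u \in (G_1)_{U'}$.

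No step is especially hard; the whole lemma is a verification. The only slightly clever observation is the cancellation of the diagonal shifts in the Capelli matrix under $\widetilde{\pi}_g$, which avoids any use of identities in $U'$ beyond $C_{n+1}(X) = 0$. The kernel computation is then a clean bookkeeping argument organized by the tensor decomposition of $U$.
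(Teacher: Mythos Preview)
Your proof is correct, and the well-definedness argument is exactly the paper's: the diagonal shift $E_{ii}\mapsto E_{ii}-\frac{1}{n+1}G_1$ is cancelled by $X\mapsto X-\frac{1}{n+1}G_1$, so the Capelli matrix is sent to itself and $\widetilde{\pi}_g(C_{n+1}(X))=C_{n+1}(X)=0$ in $U'$.

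For the kernel, your route is slightly more laborious than the paper's. You expand $\pi_g'\bigl(\sum_i w_i X^i\bigr)$ in powers of $X$, use Lemma~\ref{unique-rep} to equate coefficients, and then separate $G_1$-degrees via the PBW isomorphism $U\cong U(\mathfrak{sl}(n+1))\otimes\mathbb{C}[G_1]$ to force each $\pi_g(w_i)=0$. The paper instead observes directly that $u-\pi_g'(u)\in(G_1)$ for every $u\in U'$ (which is immediate on generators and multiplicative), so $\pi_g'(u)=0$ implies $u\in(G_1)$ in one line. Your argument has the virtue of making the role of $\ker\pi_g=(G_1)_U$ completely explicit, while the paper's avoids any coefficient bookkeeping; either way the content is the same.
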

\begin{proof}
We can see that this correspondence yields a well-defined homomorphism $U(\mathfrak{g}_{n+1}^X) \to U(\mathfrak{g}_{n+1}^X)$. To see that it yields a well-defined map $U' \to U'$, note that the determinant definition of $C_{n+1}(T)$ easily implies the identity $\pi_g' (C_{n+1}(T)) = C_{n+1} \left(\pi_g'(T) + \frac{1}{n+1}G_1\right)$, so we will have $\pi_g'(C_{n+1}(X)) = C_{n+1}(X)$; thus we have a well-defined homomorphism $\pi_g': U' \to U'$. For the kernel of $\pi_g'$, we use  that $(\sum_{i=0}^n w_i X^i) - \pi_g'(\sum_{i=0}^n w_i X^i) \in (G_1)$ for every $\sum_{i=0}^n w_i X^i \in U'$.\end{proof}

Let $U_s'$ be the image of $\pi_g'$. By Lemma \ref{lem-pi-g}, $U_s' \simeq U'/(G_1)$. We have a natural embedding $\iota_s' : U_s' \to U'$ such that $\pi_g'\iota_s' = \mbox{Id}$. 

Recall $C_{\rho}(\mathcal E + R_1) = 0$ by Theorem \ref{rho-capelli}. Thus we may define a homomorphism  $\rho' : U' \to {\mathcal D}'(n) \otimes U(\mathfrak{gl}(n))$ of associative unital algebras by the identities $\rho'(w) = \rho (w)$ for all $w \in U(\mathfrak{gl}(n+1))$ and $\rho'(X) = \mathcal E + R_1$. Also, define $\rho_s' : U_s' \to {\mathcal D}'(n) \otimes U(\mathfrak{gl}(n))$ by $\rho_s' = \rho' \iota_s'$. One hence has the following diagram.

$$
\xymatrix{ U \ar@<0.5ex>[d]^{ \pi_g}   \ar[r]^{\iota}  &   U' \ar@<0.5ex>[d]^{ \pi_g'}   \ar[r]^{\rho'\hspace{1cm}}  &  \mathcal{D}'(n) \otimes U(\mathfrak{gl}(n))   \ar@<0.5ex>[d] ^{\sigma}\\
U_s \ar[u]^{\iota_s}    \ar[r]^{\iota} & U_s' \ar[u]^{\iota_s'}    \ar[r]^{\iota'}   \ar[ur]^{\rho_s'}  &  U''     }
$$
(We will define $\sigma$ momentarily.) Note again that $\rho$ and $\rho_s$ are obtained by taking the compositions of $\rho'$ and $\rho_s'$, respectively, with the natural embedding $\iota: U \to U'$.

\begin{proposition} \label{prop-sigma} Let  $S \in U''$ be such that $[S,u_i]=0$ for $i>0$,  $[S,E_{0i}]=0$ for $i>0$, and $[S, E_{ab} - \delta_{ab}E_{00}]=0$ for $a,b>0$. The correspondence 
\begin{eqnarray*}
\frac{t_i}{t_0}& \mapsto &u_i, \; \mbox{ for } i>0,\\
t_0\partial_j & \mapsto &E_{0j}-\delta_{0j}S, \; \mbox{ for } j\geq 0,\\
E_{ab} & \mapsto & E_{ab}-u_aE_{0b}-\delta_{ab}S, \; \mbox{ for all } a,b>0.
\end{eqnarray*}
extends to a  homomorphism $\sigma: \mathcal{D}'(n) \otimes U (\mathfrak{gl} (n)) \to U''$ of associative unital algebras. Furthermore, $\sigma \rho' = \iota'\pi_g'$ and $\sigma \rho = \iota' \iota \pi_g$.
\end{proposition}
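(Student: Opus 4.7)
The proposition has two halves: well-definedness of the correspondence as an algebra homomorphism $\sigma$, and the two compositional identities. My plan for well-definedness is to check that the proposed images satisfy the defining relations of $\mathcal{D}'(n)\otimes U(\mathfrak{gl}(n))$. These relations fall into three groups: the internal relations of $\mathcal{D}'(n)$, which take the form $[t_i/t_0,t_j/t_0]=0$, $[t_0\partial_j,t_i/t_0]=\delta_{ij}-\delta_{j0}(t_i/t_0)$, and $[t_0\partial_i,t_0\partial_j]=\delta_{i0}t_0\partial_j-\delta_{j0}t_0\partial_i$; the standard $\mathfrak{gl}(n)$ brackets $[E_{ab},E_{cd}]=\delta_{bc}E_{ad}-\delta_{ad}E_{cb}$ for $a,b,c,d>0$; and the commutation of each $\mathcal{D}'(n)$-image with each $U(\mathfrak{gl}(n))$-image. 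Each required identity in $U''$ is a direct consequence of Lemmas \ref{lemma-u-e-x}, \ref{u-e-commutator}, and \ref{u-u-commute} together with the three hypothesized commutation conditions on $S$.

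For the first compositional identity $\sigma\rho'=\iota'\pi_g'$, my key preliminary step is to compute $\sigma(R_1)$. Lemma \ref{lemma-u-e-x} applied with $i=0$ gives $\sum_{j>0}u_jE_{0j}=X-E_{00}$, whence $\sigma(\mathcal{E})=(E_{00}-S)+(X-E_{00})=X-S$. A parallel calculation yields $\sigma(G_1^{\mathfrak{gl}(n)})=G_1-X-nS$, hence $\sigma(R_1)=S-G_1/(n+1)$. With this in hand, I would verify $\sigma\rho'(w)=\iota'\pi_g'(w)$ on each type of generator $w$ of $U'$: on $E_{ab}$ with $a,b>0$ the $u_aE_{0b}$ and $\delta_{ab}S$ contributions cancel exactly among $\sigma(t_a\partial_b)$, $\sigma(E_{ab})$, and $\delta_{ab}\sigma(R_1)$; on $E_{a0}$ one invokes Lemma \ref{lemma-u-e-x} again (this time with $i=a$) to identify $u_aX=E_{a0}+\sum_{j>0}u_jE_{aj}$, which collapses the expansion down to $E_{a0}$; and on $E_{0b}$, $E_{00}$, and $X$ the checks are nearly immediate. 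The second identity $\sigma\rho=\iota'\iota\pi_g$ then follows formally from $\rho=\rho'\iota$ and the compatibility $\pi_g'\iota=\iota\pi_g$.

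The main obstacle is verifying the bracket $[\sigma(E_{ab}),\sigma(E_{cd})]=\delta_{bc}\sigma(E_{ad})-\delta_{ad}\sigma(E_{cb})$ for $a,b,c,d>0$. Expanding produces nine mixed-bracket terms involving $E_{ab}$, $u_aE_{0b}$, and $\delta_{ab}S$; I expect the cross-terms of the form $\delta_{bc}u_aE_{0d}$ and $\delta_{ad}u_cE_{0b}$ appearing in $[E_{ab},u_cE_{0d}]$, $[u_aE_{0b},E_{cd}]$, and $[u_aE_{0b},u_cE_{0d}]$ to combine with carefully tracked signs to reproduce the right-hand side, while the hypothesis $[S,E_{ab}-\delta_{ab}E_{00}]=0$ forces $[S,E_{ab}]=\delta_{ab}[S,E_{00}]$, so the $S$-contributions cancel symmetrically (using $\delta_{ad}\delta_{cb}=\delta_{ad}\delta_{bc}$). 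This bookkeeping is routine but by far the most error-prone step.
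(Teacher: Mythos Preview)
Your proposal is correct and follows essentially the same approach as the paper: verify the defining relations of $\mathcal{D}'(n)\otimes U(\mathfrak{gl}(n))$ pairwise on generators using Lemmas \ref{lemma-u-e-x}, \ref{u-e-commutator}, \ref{u-u-commute} and the commutation hypotheses on $S$, then compute $\sigma(\mathcal{E})=X-S$ and $\sigma(R_1)=S-G_1/(n+1)$ and check $\sigma\rho'=\iota'\pi_g'$ on the generators $E_{ij}$ and $X$. Your identification of the $[\sigma(E_{ab}),\sigma(E_{cd})]$ bracket as the delicate step, and your handling of the $S$-terms via $[S,E_{ab}]=\delta_{ab}[S,E_{00}]$, match the paper's computation exactly.
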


\begin{remark}
We can see such $S$ exists and thus such $\sigma$ exists; e.g., we could take $S=0$.
\end{remark}

\begin{proof} Note that  $[S,E_{i0} + u_iE_{00}] = 0$ by Lemma \ref{lemma-u-e-x}. To check that the correspondence extends to a homomorphism, we verify that $\sigma([x,y]) = [\sigma(x),\sigma(y)]$ whenever $x,y$ equal one of the generators $\frac{t_i}{t_0}, t_0\partial_i, E_{ab}$.

By Lemma \ref{u-u-commute}, we have $\sigma([\frac{t_i}{t_0},\frac{t_j}{t_0}])=\sigma(0)=0=[u_i,u_j]=[\sigma(\frac{t_i}{t_0}),\sigma(\frac{t_j}{t_0})]$, as desired.

By Lemma \ref{u-e-commutator}, we have $\sigma([\frac{t_i}{t_0},t_0\partial_j])=\sigma(\delta_{0j}\frac{t_i}{t_0}-\delta_{ij})=\delta_{0j}u_i-\delta_{ij}=[u_i,E_{0j}]=[\sigma(\frac{t_i}{t_0}),\sigma(t_0\partial_j)]$ as $u_0=1$ and $[u_i,S]=0$.

By Lemma \ref{u-e-commutator} and Lemma \ref{u-u-commute}, $\sigma([\frac{t_i}{t_0},E_{ab}])=\sigma(0)=0=\delta_{b0}u_au_i-\delta_{bi}u_a-u_a(\delta_{b0}u_i-\delta_{bi}) =[u_i,E_{ab}]-[u_i,u_a]E_{0b}-u_a[u_i,E_{0b}]=[u_i,E_{ab}-u_aE_{0b}-\delta_{ab}S]=[\sigma(\frac{t_i}{t_0}),\sigma(E_{ab})]$, as $u_0=1$ and $[u_i,u_a]=0$ and $[u_i,S]=0$.

We have $\sigma([t_0\partial_i,t_0\partial_j])=\sigma(\delta_{i0}t_0\partial_j-\delta_{j0}t_0\partial_i)=\delta_{i0}(E_{0j}-\delta_{j0}S)-\delta_{j0}(E_{0i}-\delta_{i0}S)=\delta_{i0}E_{0j}-\delta_{j0}E_{0i}=[E_{0i},E_{0j}]=[\sigma(t_0\partial_i),\sigma(t_0\partial_j)]$, by the definition of $S$.

By Lemma \ref{u-e-commutator} we have $\sigma([t_0\partial_i,E_{ab}]) = 0 = [E_{0i} - \delta_{0i}S,E_{ab}- u_aE_{0b}- \delta_{ab}S] = [\sigma(t_0\partial_i),\sigma(E_{ab})])$

Finally, using again Lemma \ref{u-e-commutator} we have
\begin{eqnarray*}
\sigma([E_{ab}, E_{cd}]) & = &\sigma(\delta_{bc} E_{ad}- \delta_{ad} E_{cb}) = \delta_{bc} (E_{ad} - u_a E_{0d} - \delta_{ad}S) - \delta_{ad} (E_{cb} - u_c E_{0b} - \delta_{cb}S)\\
& = & \delta_{bc} E_{ad} - \delta_{ad} E_{cb} -\delta_{bc} u_a E_{0d}  +  \delta_{ad}  u_c E_{0b} \\
& = &  [E_{ab}, E_{cd}] + [E_{ab},-u_cE_{0d}] + [-u_aE_{0b},E_{cd}] + [- u_a E_{0b}, - u_c E_{0d}] \\
& = & [E_{ab} - u_a E_{0b}, E_{cd} - u_c E_{0d}] + [E_{ab}, - \delta_{cd}S] + [-\delta_{ab}S,E_{cd}]\\
& = &  [E_{ab} - u_a E_{0b} - \delta_{ab}S, E_{cd} - u_c E_{0d} - \delta_{cd}S] \\
& = & [\sigma(E_{ab}), \sigma(E_{cd})]. 
\end{eqnarray*}
In the above sequence of identities  we used that $[S,\delta_{cd} E_{ab} - \delta_{ab}E_{cd}] = [S, \delta_{cd}(E_{ab}-\delta_{ab}E_{00})-\delta_{ab}(E_{cd}-\delta_{cd}E_{00})] = 0$ and that $[E_{ab},-u_cE_{0d}] + [-u_aE_{0b},E_{cd}] + [- u_a E_{0b}, - u_c E_{0d}] = -\delta_{bc}u_a E_{0d} + \delta_{ad}u_cE_{0b}$. 

Thus $\sigma$ is indeed a homomorphism of associative unital algebras.

Note that $\sigma(\mathcal{E}) = X-S$ and $\sigma\left(R_1\right) = S - \frac{1}{n+1}G_1$. Using the definitions of $\sigma$ and $\rho'$, it is easy to verify that $\sigma \rho' (X) = X- \frac{1}{n+1}G_1$, $\sigma \rho' (E_{ij}) = E_{ij}$ for $i \neq j$, and $\sigma \rho' (E_{ii}) = E_{ii} - \frac{1}{n+1}G_1$. Hence, $\sigma \rho' = \iota'\pi_g'$. The identity $\sigma \rho = \iota' \iota \pi_g$ follows from $\rho = \rho'\iota$ and $\pi_g'\iota = \iota \pi_g$.\end{proof}

\begin{theorem} We have the following:
\begin{itemize}
\item[(i)] $\ker \rho' = (G_1)$ in $U'$, and $\ker \rho = (G_1)$ in $U$.
\item[(ii)] $\ker \rho_s' = (0)$ in $U_s'$, and $\ker \rho_s = (0)$ in $U_s$.
\end{itemize}
\end{theorem}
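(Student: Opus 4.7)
The plan is to deduce both parts as direct consequences of Proposition~\ref{prop-sigma} together with Lemma~\ref{lem-pi-g}; the hard work has already been invested in constructing the pseudo-inverse $\sigma$ and verifying $\sigma\rho'=\iota'\pi_g'$, so what remains is essentially a diagram chase.

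For part (i), the containment $(G_1)\subseteq\ker\rho'$ is immediate from the already-recorded identity $\rho(G_1)=0$ (noted just after Lemma~3.1) together with the fact that $\rho'$ extends $\rho$. For the reverse containment, I would take $u\in\ker\rho'$ and apply Proposition~\ref{prop-sigma} to obtain $\iota'\pi_g'(u)=\sigma\rho'(u)=0$; since $\iota'\colon U'\to U''$ is an embedding by construction (as the canonical map into the Ore quotient of the domain $U'$), this forces $\pi_g'(u)=0$, and Lemma~\ref{lem-pi-g} then yields $u\in(G_1)$. The corresponding statement for $\rho$ follows by the same argument applied through $\iota\colon U\to U'$, using $\sigma\rho=\iota'\iota\pi_g$ together with the elementary identity $\ker\pi_g=(G_1)$ in $U$ (a consequence of the PBW decomposition $U=U(\mathfrak{sl}(n+1))\otimes\mathbb{C}[G_1]$ and the fact that $\pi_g|_{\mathfrak{sl}(n+1)}=\mathrm{Id}$).

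For part (ii), I would exploit the splittings $\pi_g'\iota_s'=\mathrm{Id}$ and $\pi_g\iota_s=\mathrm{Id}$ to reduce kernel-triviality to part (i). If $u\in\ker\rho_s'$, then $\rho'(\iota_s'(u))=\rho_s'(u)=0$, so by (i) the element $\iota_s'(u)$ lies in $(G_1)=\ker\pi_g'$; applying $\pi_g'$ yields $u=\pi_g'\iota_s'(u)=0$. The argument for $\rho_s$ is identical with the primes removed throughout.

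I anticipate no substantive obstacle, since Proposition~\ref{prop-sigma} already absorbs the main difficulty. The only small point worth double-checking is the injectivity of $\iota'$, which is transparent from the construction of $U''$ as the Ore localization of $U'$ at its nonzero elements.
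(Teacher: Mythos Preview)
Your proposal is correct and follows essentially the same approach as the paper: both parts are derived from Proposition~\ref{prop-sigma} (the identity $\sigma\rho'=\iota'\pi_g'$) combined with Lemma~\ref{lem-pi-g}, with part~(ii) reduced to part~(i) via the splitting $\pi_g'\iota_s'=\mathrm{Id}$. Your write-up is more explicit about the injectivity of $\iota'$ and the diagram chase, but the underlying argument is identical to the paper's.
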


\begin{proof} For part (i) we first note that $\rho'(G_1) = \rho(G_1) = 0$. To complete the proof we use Lemma \ref{lem-pi-g} along with Proposition \ref{prop-sigma} and the fact that the kernel of $\pi_g$ is $(G_1)$ in $U$. As for part (ii), note that $\pi_s'\pi_s'=\pi_s'$; then, if $t \in \ker \rho_s' = (G_1) \cap U_s$, then we have $t=\pi_s'(t)=0$. Thus $\ker \rho_s' = \{0\}$. Then $\ker \rho_s \subset \ker \rho_s'$, so $\ker \rho_s = \{0\}$.  \end{proof}

\newpage
\section*{Appendix. Formulas for the images of certain elements under $\rho$} 
In this appendix we provide explicit formulas for the images under $\rho$ of $r_k^{\mathfrak{gl}(n+1)}(a,b)$. For the definition of the latter see  \eqref{def-r-k}. The proofs of these formulas are independent of the results in the previous sections of the paper.  In this way we have an alternative proof of Theorem \ref{thm-rho-g}. It is interesting to note that one can then go ``backwards'' and prove Theorem \ref{rho-capelli} and then Theorem \ref{thm-hc-rho} purely computationally. Thus, this appendix leads to an alternative (more computational) approach of the results established in Sections \ref{sec-5} and \ref{sec-6}.

Note that $r_{k+1}^{\mathfrak{gl}(N)}(a,b) = \sum_{i}r_k^{\mathfrak{gl}(N)}(a,i)E_{ib}$ for all nonnegative integers $k$, for $N=n$ or $n+1$, and for all $a,b$.

For a positive integer $m$ and for $a$ and $b$ with $0\leq a \leq n$ and $1\leq b \leq n$, define $$f_m(a,b)=\sum_{i=1}^n t_a \partial_i \otimes r_{m-1}^{\mathfrak{gl}(n)}(i,b).$$

\medskip
\noindent {\bf Theorem.} \emph{ For all positive integers $k$, $a$, $b$, such that $a,b\leq n$,}
\begin{eqnarray*}
\rho(r_k^{\mathfrak{gl}(n+1)}(a,b))& = & \sum_{m=1}^k \left( f_m(a,b) \sum_{g=0}^{k-m} \binom{k}{g} R_1^g R_2^{k-m-g} \right)+\sum_{g=0}^k \binom{k}{g} R_1^g \left( 1 \otimes r_{k-g}^{\mathfrak{gl}(n)}(a,b) \right), \\
\rho(r_k^{\mathfrak{gl}(n+1)}(a,0))& = &\left( \sum_{g=0}^{k-1} \binom{k}{g} R_1^g R_2^{k-1-g} \right)\left( t_a \partial_0 \otimes 1 \right) - \sum_{g=0}^{k-1} \left( \binom{k}{g} R_1^g \sum_{j>0} \frac{t_j}{t_0} \otimes r_{k-g}^{\mathfrak{gl}(n)}(a,j) \right) \\
&& - \left( \frac{t_a}{t_0} \otimes 1 \right) \sum_{m=2}^k \left( \sum_{g=0}^{k-m} \binom{k}{g} R_1^g R_2^{k-m-g} \right)\left( \sum_{i,j>0} \partial_i t_j \otimes r_{m-1}^{\mathfrak{gl}(n)}(i,j) \right),\\
\rho(r_k^{\mathfrak{\mathfrak{gl}}(n+1)}(0,b))& = & \sum_{m=1}^k \left( f_m(0,b) \sum_{g=0}^{k-m} \binom{k}{g} R_1^g R_2^{k-m-g} \right),\\
\rho(r_k^{\mathfrak{\mathfrak{gl}}(n+1)}(0,0))& = & \left( \sum_{g=0}^{k-1} \binom{k}{g} R_1^g R_2^{k-1-g} \right)\left( t_0 \partial_0 \otimes 1 \right)+R_1^k\\
&& - \sum_{m=2}^k \left( \sum_{g=0}^{k-m} \binom{k}{g} R_1^g R_2^{k-m-g} \right)\left( \sum_{i,j>0} \partial_i t_j \otimes r_{m-1}^{\mathfrak{gl}(n)}(i,j) \right).
\end{eqnarray*}

\medskip

\begin{proof} We prove all four statements simultaneously by induction on $k$. The base case $k=1$ follows from the definition of $\rho$.  Suppose the formulas in the statement of the Theorem are true for some positive integer $k$. Let us prove them for $k+1$.

\bigskip
First, we consider the value of $\rho(r_k^{\mathfrak{gl}(n+1)}(a,b))$ for $a,b>0$. We have

\begin{small}
\begin{eqnarray*}
\rho(r_{k+1}^{\mathfrak{gl}(n+1)}(a,b)) &= & \rho(r_k^{\mathfrak{gl}(n+1)}(a,0)E_{0b}+\sum_{i>0}r_k^{\mathfrak{gl}(n+1)}(a,i)E_{ib}) \\
& = & \rho(r_k^{\mathfrak{gl}(n+1)}(a,0))\rho(E_{0b})+\sum_{i>0}\rho(r_k^{\mathfrak{gl}(n+1)}(a,i))\rho(E_{ib}) \\
& =  &    \Biggl( \left( \sum_{g=0}^{k-1} \binom{k}{g} R_1^g R_2^{k-1-g} \right)\left( t_a \partial_0 \otimes 1 \right) - \sum_{g=0}^{k-1} \left( \binom{k}{g} R_1^g \sum_{j>0} \frac{t_j}{t_0} \otimes r_{k-g}^{\mathfrak{gl}(n)}(a,j) \right)\\
& - & \left( \frac{t_a}{t_0} \otimes 1 \right) \sum_{m=2}^k \left( \sum_{g=0}^{k-m} \binom{k}{g} R_1^g R_2^{k-m-g} \right)\left( \sum_{i,j>0} \partial_i t_j \otimes r_{m-1}^{\mathfrak{gl}(n)}(i,j) \right) \Biggr)(t_0 \partial_b \otimes 1) \\
& + & \sum_{i>0} \Biggl( \sum_{m=1}^k \left( f_m(a,i) \sum_{g=0}^{k-m} \binom{k}{g} R_1^g R_2^{k-m-g} \right) \\
& + & \sum_{g=0}^k \binom{k}{g} R_1^g \left( 1 \otimes r_{k-g}^{\mathfrak{gl}(n)}(a,i) \right) \Biggr)(t_i \partial_b \otimes 1 + 1 \otimes E_{ib} + \delta_{ib}R_1) \\
& = &  \left( \sum_{g=0}^{k-1} \binom{k}{g} R_1^g R_2^{k-1-g} \right)\left( t_a \partial_0 t_0 \partial_b \otimes 1 \right) - \sum_{g=0}^{k-1} \left( \binom{k}{g} R_1^g \sum_{j>0} t_j \partial_b \otimes r_{k-g}^{\mathfrak{gl}(n)}(a,j) \right)\\
& - & \left( \frac{t_a}{t_0} \otimes 1 \right) \sum_{m=2}^k \left( \sum_{g=0}^{k-m} \binom{k}{g} R_1^g R_2^{k-m-g} \right)\left( \sum_{i,j>0} \partial_i t_j \otimes r_{m-1}^{\mathfrak{gl}(n)}(i,j) \right)(t_0 \partial_b \otimes 1) \\
& + & \sum_{i>0} \sum_{m=1}^k \left( f_m(a,i) (t_i \partial_b \otimes 1)  \sum_{g=0}^{k-m} \binom{k}{g} R_1^g R_2^{k-m-g} \right) +  \sum_{i>0} \sum_{g=0}^k \binom{k}{g} R_1^g \left( t_i \partial_b \otimes r_{k-g}^{\mathfrak{gl}(n)}(a,i) \right) \\
& + &  \sum_{i>0} \sum_{m=1}^k \left( f_m(a,i) (1 \otimes E_{ib})  \sum_{g=0}^{k-m} \binom{k}{g} R_1^g R_2^{k-m-g} \right) + \sum_{i>0} \sum_{g=0}^k \binom{k}{g} R_1^g \left( 1 \otimes r_{k-g}^{\mathfrak{gl}(n)}(a,i) E_{ib} \right) \\
& + & \sum_{m=1}^k \left( f_m(a,b) \sum_{g=0}^{k-m} \binom{k}{g} R_1^{g+1} R_2^{k-m-g} \right) + \sum_{g=0}^k \binom{k}{g} R_1^{g+1} \left( 1 \otimes r_{k-g}^{\mathfrak{gl}(n)}(a,b) \right).
\end{eqnarray*}
\end{small}

Write the last expression in the form $X_1-X_2-X_3+X_4+X_5+X_6+X_7+X_8+X_9$. Then $X_5-X_2=\sum_{i>0} \sum_{g=0}^k \binom{k}{g} R_1^g \left( t_i \partial_b \otimes r_{k-g}^{\mathfrak{gl}(n)}(a,i) \right) - \sum_{g=0}^{k-1} \left( \binom{k}{g} R_1^g \sum_{j>0} t_j \partial_b \otimes r_{k-g}^{\mathfrak{gl}(n)}(a,j) \right)= R_1^k(t_a\partial_b \otimes 1).$

On the other hand, $X_4=\sum_{m=1}^k((\sum_{j,i>0} t_a\partial_j t_i \partial_b \otimes r_{m-1}^{\mathfrak{gl}(n)}(j,i))(\sum_{g=0}^{k-m} \binom{k}{g} R_1^gR_2^{k-m-g})).$ Also, $X_3= \sum_{m=2}^k((\sum_{j,i>0} t_a\partial_j t_i \partial_b \otimes r_{m-1}^{\mathfrak{gl}(n)}(j,i))(\sum_{g=0}^{k-m} \binom{k}{g} R_1^gR_2^{k-m-g})).$ Then $X_4-X_3=(\sum_{j,i>0} t_a\partial_jt_i\partial_b \otimes r_0^{\mathfrak{gl}(n)}(j,i))(\sum_{g=0}^{k-1} \binom{k}{g}R_1^gR_2^{k-1-g})=(\sum_{i>0}t_a\partial_it_i\partial_b \otimes 1)(\sum_{g=0}^{k-1} \binom{k}{g}R_1^gR_2^{k-1-g}),$ so $X_1+X_4-X_3=(\sum_{i \geq 0}t_a\partial_it_i\partial_b \otimes 1)(\sum_{g=0}^{k-1} \binom{k}{g}R_1^gR_2^{k-1-g}) = (t_a\partial_b \otimes 1)(\sum_{g=0}^{k-1} \binom{k}{g}R_1^gR_2^{k-g}),$ as $\sum_{i\geq 0} t_a \partial_i t_i \partial_b \otimes 1 = (t_a \partial_b \otimes 1)R_2.$

Therefore we have $(X_1+X_4-X_3)+(X_5-X_2) = (t_a\partial_b \otimes 1)(\sum_{g=0}^k \binom{k}{g}R_1^gR_2^{k-g}).$

Furthermore, since $\sum_{i>0} f_m(a,i)(1 \otimes E_{ib}) = f_{m+1}(a,b)$, we have\\ $X_6=\sum_{m=1}^k(f_{m+1}(a,b)\sum_{g=0}^{k-m}\binom{k}{g}R_1^gR_2^{k-m-g})=\sum_{m=2}^{k+1}f_m(a,b)(\sum_{g=0}^{k+1-m}\binom{k}{g}R_1^gR_2^{k+1-m-g})$. \\ Thus $X_6+((X_1+X_4-X_3)+(X_5-X_2)) = \sum_{m=1}^{k+1}f_m(a,b)(\sum_{g=0}^{k+1-m}\binom{k}{g}R_1^gR_2^{k+1-m-g}).$ \\ But  $X_8=\sum_{m=1}^k ( f_m(a,b) \sum_{g=1}^{k-m+1} \binom{k}{g-1} R_1^g R_2^{k-m-g+1}).$ Hence,\\ $(X_6+((X_1+X_4-X_3)+(X_5-X_2)))+X_8 = \sum_{m=1}^{k+1}(f_m(a,b)\sum_{g=0}^{k+1-m}\binom{k+1}{g}R_1^gR_2^{k+1-m-g}).$

Now using that $\sum_{i>0} r_{k-g}^{\mathfrak{gl}(n)}(a,i)E_{ib} = r_{k+1-g}^{\mathfrak{gl}(n)}(a,b)$, we find $X_7=\sum_{g=0}^k\binom{k}{g}R_1^g(1\otimes r_{k+1-g}^{\mathfrak{gl}(n)}(a,b))$. Also $X_9=\sum_{g=1}^{k+1} \binom{k}{g-1} R_1^g (1\otimes r_{k+1-g}^{\mathfrak{gl}(n)}(a,b))$ implies that $X_7+X_9=\sum_{g=0}^{k+1} \binom{k+1}{g}R_1^g (1\otimes r_{k+1-g}^{\mathfrak{gl}(n)}(a,b)).$

Therefore, $\rho(r_{k+1}^{\mathfrak{gl}(n+1)}(a,b)) = ((X_6+((X_1+X_4-X_3)+(X_5-X_2)))+X_8) + (X_7+X_9) = \\ \sum_{m=1}^{k+1}(f_m(a,b)\sum_{g=0}^{k+1-m}\binom{k+1}{g}R_1^gR_2^{k+1-m-g} + \sum_{g=0}^{k+1} \binom{k+1}{g}R_1^g (1\otimes r_{k+1-g}^{\mathfrak{gl}(n)}(a,b)).$ This completes the proof of the inductive step for $\rho(r_{k+1}^{\mathfrak{gl}(n+1)}(a,b))$ for $a,b>0$.

\bigskip

Next, we consider the value of $\rho(r_k^{\mathfrak{gl}(n+1)}(a,0))$ for $a>0$. We have

\begin{eqnarray*}
\rho(r_{k+1}^{\mathfrak{gl}(n+1)}(a,0)) & = & \rho(r_k^{\mathfrak{gl}(n+1)}(a,0)E_{00} + \sum_{i>0} r_k^{\mathfrak{gl}(n+1)}(a,i)E_{i0}) \\
& = & \rho(r_k^{\mathfrak{gl}(n+1)}(a,0))\rho(E_{00}) + \sum_{i>0} \rho(r_k^{\mathfrak{gl}(n+1)}(a,i))\rho(E_{i0}) \\
& = & \Biggl( \left( \sum_{g=0}^{k-1} \binom{k}{g} R_1^g R_2^{k-1-g} \right)\left( t_a \partial_0 \otimes 1 \right) - \sum_{g=0}^{k-1} \left( \binom{k}{g} R_1^g \sum_{j>0} \frac{t_j}{t_0} \otimes r_{k-g}^{\mathfrak{gl}(n)}(a,j) \right) \\
& - & \left( \frac{t_a}{t_0} \otimes 1 \right) \sum_{m=2}^k \left( \sum_{g=0}^{k-m} \binom{k}{g} R_1^g R_2^{k-m-g} \right)\left( \sum_{i,j>0} \partial_i t_j \otimes r_{m-1}^{\mathfrak{gl}(n)}(i,j) \right) \Biggr) (t_0 \partial_0 \otimes 1 + R_1) \\
& + & \sum_{i>0}\Biggl( \sum_{m=1}^k \left( f_m(a,i) \sum_{g=0}^{k-m} \binom{k}{g} R_1^g R_2^{k-m-g} \right)\\ 
& + & \sum_{g=0}^k \binom{k}{g} R_1^g \left( 1 \otimes r_{k-g}^{\mathfrak{gl}(n)}(a,i) \right) \Biggr)(t_i\partial_0 \otimes 1 - \sum_{j>0} \frac{t_j}{t_0} \otimes E_{ij}) \\
& = & \left(\sum_{g=0}^{k-1} \binom{k}{g} R_1^g R_2^{k-1-g} \right)(t_a \partial_0 t_0 \partial_0 \otimes 1) - \sum_{g=0}^{k-1} \left( \binom{k}{g} R_1^g \sum_{j>0} t_j\partial_0 \otimes r_{k-g}^{\mathfrak{gl}(n)}(a,j) \right) \\
& - & \sum_{m=2}^k \left( \sum_{g=0}^{k-m} \binom{k}{g} R_1^g R_2^{k-m-g} \right)\left( \sum_{i,j>0} t_a \partial_i t_j \partial_0 \otimes r_{m-1}^{\mathfrak{gl}(n)}(i,j) \right) \\
& + & \left( \sum_{g=0}^{k-1} \binom{k}{g} R_1^{g+1} R_2^{k-1-g} \right)\left( t_a \partial_0 \otimes 1 \right) - \sum_{g=0}^{k-1} \left( \binom{k}{g} R_1^{g+1} \sum_{j>0} \frac{t_j}{t_0} \otimes r_{k-g}^{\mathfrak{gl}(n)}(a,j) \right) \\
& - & \left( \frac{t_a}{t_0} \otimes 1 \right) \sum_{m=2}^k \left( \sum_{g=0}^{k-m} \binom{k}{g} R_1^{g+1} R_2^{k-m-g} \right)\left( \sum_{i,j>0} \partial_i t_j \otimes r_{m-1}^{\mathfrak{gl}(n)}(i,j) \right) 
\end{eqnarray*}
\begin{eqnarray*}
& + & \sum_{m=1}^k \left(\sum_{i>0} f_m(a,i)(t_i \partial_0 \otimes 1) \right) \left(\sum_{g=0}^{k-m} \binom{k}{g} R_1^g R_2^{k-m-g}\right) \\
& + & \sum_{g=0}^k \binom{k}{g} R_1^g \left(\sum_{i>0} t_i \partial_0 \otimes r_{k-g}^{\mathfrak{gl}(n)}(a,i)\right) \\
& - & \sum_{m=1}^k \left(\sum_{i,j>0} f_m(a,i)\left(\frac{t_j}{t_0} \otimes E_{ij}\right) \right) \left(\sum_{g=0}^{k-m} \binom{k}{g} R_1^g R_2^{k-m-g}\right) \\
& - & \sum_{g=0}^k \binom{k}{g} R_1^g \left(\sum_{i,j>0} \frac{t_j}{t_0} \otimes r_{k-g}^{\mathfrak{gl}(n)}(a,i)E_{ij}\right).
\end{eqnarray*}

We write the last expression as  $X_1 - X_2 - X_3 + X_4 - X_5 - X_6 + X_7 + X_8 - X_9 - X_{10}$. Then $X_8-X_2 = \sum_{g=0}^k \binom{k}{g} R_1^g \left(\sum_{i>0} t_i \partial_0 \otimes r_{k-g}^{\mathfrak{gl}(n)}(a,i)\right) - \sum_{g=0}^{k-1} \binom{k}{g} R_1^g \left(\sum_{i>0} t_i \partial_0 \otimes r_{k-g}^{\mathfrak{gl}(n)}(a,i)\right) = R_1^k(t_a \partial_0 \otimes 1)$.

Also, $X_5+X_{10} = \left(\sum_{g=1}^k \binom{k}{g-1} R_1^g \sum_{j>0} \frac{t_j}{t_0} \otimes r_{k+1-g}^{\mathfrak{gl}(n)}(a,j)\right) + \left(\sum_{g=0}^k \binom{k}{g} R_1^g \sum_{j>0} \frac{t_j}{t_0} \otimes r_{k+1-g}^{\mathfrak{gl}(n)}(a,j)\right) \\ = \left(\sum_{g=0}^k \binom{k+1}{g} R_1^g \sum_{j>0} \frac{t_j}{t_0} \otimes r_{k+1-g}^{\mathfrak{gl}(n)}(a,j)\right)$.

We next have $X_9 = \sum_{m=1}^k \Biggl( \sum_{i,j>0} f_m(a,i) \left(\frac{t_j}{t_0} \otimes E_{ij}\right)\Biggr)\left(\sum_{g=0}^{k-m} \binom{k}{g} R_1^gR_2^{k-m-g}\right)$. Now, for any positive integer $m$, $\sum_{i,j>0} f_m(a,i) \left(\frac{t_j}{t_0} \otimes E_{ij}\right) = \sum_{i,j>0} \left(\sum_{\ell>0} t_a \partial_{\ell} \otimes r_{m-1}^{\mathfrak{gl}(n)}(\ell,i)\right) \left(\frac{t_j}{t_0} \otimes E_{ij}\right) = \sum_{j,\ell>0} t_a\partial_{\ell}t_jt_0^{-1} \otimes \left(\sum_{i>0} r_{m-1}^{\mathfrak{gl}(n)}(\ell,i)E_{ij}\right) = \sum_{j,\ell>0} t_a\partial_{\ell}t_jt_0^{-1} \otimes r_m^{\mathfrak{gl}(n)}(\ell,j) = \\ \left(\frac{t_a}{t_0} \otimes 1\right)\sum_{j,\ell>0} \partial_{\ell}t_j \otimes r_m^{\mathfrak{gl}(n)}(\ell,j)$, which is the same as $\left(\frac{t_a}{t_0} \otimes 1\right)\sum_{i,j>0} \partial_it_j \otimes r_m^{\mathfrak{gl}(n)}(i,j)$.  Then \\ $X_9 =  \left(\frac{t_a}{t_0} \otimes 1\right)\sum_{m=2}^{k+1} \left(\sum_{g=0}^{k+1-m} \binom{k}{g} R_1^gR_2^{k+1-m-g}\right) \left(\sum_{i,j>0} \partial_it_j \otimes r_{m-1}^{\mathfrak{gl}(n)}(i,j)\right)$. Also,\\ $X_6= \left(\frac{t_a}{t_0} \otimes 1\right)\sum_{m=2}^k \left( \sum_{g=1}^{k+1-m} \binom{k}{g-1} R_1^{g} R_2^{k+1-m-g} \right)\left( \sum_{i,j>0} \partial_i t_j \otimes r_{m-1}^{\mathfrak{gl}(n)}(i,j) \right)$. Therefore, \\ $X_6+X_9= \left(\frac{t_a}{t_0} \otimes 1\right)\sum_{m=2}^{k+1} \left(\sum_{g=0}^{k+1-m} \binom{k+1}{g} R_1^gR_2^{k+1-m-g}\right) \left(\sum_{i,j>0} \partial_it_j \otimes r_{m-1}^{\mathfrak{gl}(n)}(i,j)\right)$.

Next, for any positive integer $m$, $\sum_{i>0} f_m(a,i)(t_i\partial_0 \otimes 1) = \sum_{i,j>0} t_a\partial_jt_i\partial_0 \otimes r_{m-1}^{\mathfrak{gl}(n)}(j,i) = \sum_{i,j>0} t_a\partial_it_j\partial_0 \otimes r_{m-1}^{\mathfrak{gl}(n)}(i,j)$. Then $X_7 = \sum_{m=1}^k \left(\sum_{i,j>0} t_a\partial_it_j\partial_0 \otimes r_{m-1}^{\mathfrak{gl}(n)}(i,j) \right) \left(\sum_{g=0}^{k-m} \binom{k}{g} R_1^g R_2^{k-m-g}\right)$. On the other hand, $X_3 = \sum_{m=2}^k \left(\sum_{i,j>0} t_a\partial_it_j\partial_0 \otimes r_{m-1}^{\mathfrak{gl}(n)}(i,j) \right) \left(\sum_{g=0}^{k-m} \binom{k}{g} R_1^g R_2^{k-m-g}\right)$. Thus $X_7-X_3 = (\sum_{i,j>0} t_a\partial_it_j\partial_0 \otimes \delta_{ij}) \left(\sum_{g=0}^{k-1} \binom{k}{g} R_1^g R_2^{k-1-g}\right)=\left(\sum_{g=0}^{k-1} \binom{k}{g} R_1^g R_2^{k-1-g}\right)(t_a\partial_0 \otimes 1)(R_2-(t_0\partial_0 \otimes 1))$, and subsequently $X_7-X_3+X_1 = \left(\sum_{g=0}^{k-1} \binom{k}{g} R_1^g R_2^{k-1-g}\right)(t_a\partial_0 \otimes 1)R_2 =\\ \left(\sum_{g=0}^{k-1} \binom{k}{g} R_1^g R_2^{k-g}\right)(t_a\partial_0 \otimes 1)$. Therefore, $X_7-X_3+X_1+X_8-X_2 = \left(\sum_{g=0}^{k} \binom{k}{g} R_1^g R_2^{k-g}\right)(t_a\partial_0 \otimes 1)$. Since $X_4 = \left(\sum_{g=1}^{k} \binom{k}{g-1} R_1^g R_2^{k-g}\right)(t_a\partial_0 \otimes 1)$, \\ $X_7-X_3+X_1+X_8-X_2+X_4 = \left(\sum_{g=0}^{k} \binom{k+1}{g} R_1^g R_2^{k-g}\right)(t_a\partial_0 \otimes 1)$.

Hence, $\rho(r_{k+1}^{\mathfrak{gl}(n+1)}(a,0)) = (X_7-X_3+X_1+X_8-X_2+X_4) - (X_5+X_{10}) - (X_6 + X_9) = \left(\sum_{g=0}^{k} \binom{k+1}{g} R_1^g R_2^{k-g}\right)(t_a\partial_0 \otimes 1) - \left(\sum_{g=0}^k \binom{k+1}{g} R_1^g \sum_{j>0} \frac{t_j}{t_0} \otimes r_{k+1-g}^{\mathfrak{gl}(n)}(a,j)\right)\\ - \left(\frac{t_a}{t_0} \otimes 1\right)\sum_{m=2}^{k+1} \left(\sum_{g=0}^{k+1-m} \binom{k+1}{g} R_1^gR_2^{k+1-m-g}\right) \left(\sum_{i,j>0} \partial_it_j \otimes r_{m-1}^{\mathfrak{gl}(n)}(i,j)\right)$, as desired. This completes the inductive step for the value of $\rho(r_{k+1}^{\mathfrak{gl}(n+1)}(a,0))$.

\bigskip

Next, we consider the value of $\rho(r_k^{\mathfrak{gl}(n+1)}(0,b))$ for $b>0$.

\begin{eqnarray*}
\rho(r_{k+1}^{\mathfrak{gl}(n+1)}(0,b)) &= & \rho(r_k^{\mathfrak{gl}(n+1)}(0,0)E_{0b}+\sum_{i>0}r_k^{\mathfrak{gl}(n+1)}(0,i)E_{ib}) \\
& = & \rho(r_k^{\mathfrak{gl}(n+1)}(0,0))\rho(E_{0b})+\sum_{i>0}\rho(r_k^{\mathfrak{gl}(n+1)}(0,i))\rho(E_{ib}) \\
& =  &    \Biggl( \left( \sum_{g=0}^{k-1} \binom{k}{g} R_1^g R_2^{k-1-g} \right)\left( t_0 \partial_0 \otimes 1 \right) + R_1^k\\
& - &  \sum_{m=2}^k \left( \sum_{g=0}^{k-m} \binom{k}{g} R_1^g R_2^{k-m-g} \right)\left( \sum_{i,j>0} \partial_i t_j \otimes r_{m-1}^{\mathfrak{gl}(n)}(i,j) \right) \Biggr)(t_0 \partial_b \otimes 1) \\
& + & \sum_{i>0} \Biggl( \sum_{m=1}^k \left( f_m(0,i) \sum_{g=0}^{k-m} \binom{k}{g} R_1^g R_2^{k-m-g} \right) \Biggr)(t_i \partial_b \otimes 1 + 1 \otimes E_{ib} + \delta_{ib}R_1) \\
& = &  \left( \sum_{g=0}^{k-1} \binom{k}{g} R_1^g R_2^{k-1-g} \right)\left( t_0 \partial_0 t_0 \partial_b \otimes 1 \right)  + R_1^k(t_0\partial_b \otimes 1)\\
& - & \sum_{m=2}^k \left( \sum_{g=0}^{k-m} \binom{k}{g} R_1^g R_2^{k-m-g} \right)\left( \sum_{i,j>0} \partial_i t_j \otimes r_{m-1}^{\mathfrak{gl}(n)}(i,j) \right)(t_0 \partial_b \otimes 1) \\
& + & \sum_{i>0} \sum_{m=1}^k \left( f_m(0,i) (t_i \partial_b \otimes 1)  \sum_{g=0}^{k-m} \binom{k}{g} R_1^g R_2^{k-m-g} \right) \\
& + &  \sum_{i>0} \sum_{m=1}^k \left( f_m(0,i) (1 \otimes E_{ib})  \sum_{g=0}^{k-m} \binom{k}{g} R_1^g R_2^{k-m-g} \right) \\
& + & \sum_{m=1}^k \left( f_m(0,b) \sum_{g=0}^{k-m} \binom{k}{g} R_1^{g+1} R_2^{k-m-g} \right).
\end{eqnarray*}

We write the last  expression as $X_1+X_2-X_3+X_4+X_5+X_6$.

Now $X_4=\sum_{m=1}^k((\sum_{j,i>0} t_0\partial_j t_i \partial_b \otimes r_{m-1}^{\mathfrak{gl}(n)}(j,i))(\sum_{g=0}^{k-m} \binom{k}{g} R_1^gR_2^{k-m-g})).$ Also,\\ $X_3=  \sum_{m=2}^k((\sum_{j,i>0} t_0\partial_j t_i \partial_b \otimes r_{m-1}^{\mathfrak{gl}(n)}(j,i))(\sum_{g=0}^{k-m} \binom{k}{g} R_1^gR_2^{k-m-g})).$ Then \\ $X_4-X_3=(\sum_{j,i>0} t_a\partial_jt_i\partial_b \otimes r_0^{\mathfrak{gl}(n)}(j,i))(\sum_{g=0}^{k-1} \binom{k}{g}R_1^gR_2^{k-1-g})=(\sum_{i>0}t_0\partial_it_i\partial_b \otimes 1)(\sum_{g=0}^{k-1} \binom{k}{g}R_1^gR_2^{k-1-g}),$\\ so $X_1+X_4-X_3=(\sum_{i \geq 0}t_0\partial_it_i\partial_b \otimes 1)(\sum_{g=0}^{k-1} \binom{k}{g}R_1^gR_2^{k-1-g}) = (t_0\partial_b \otimes 1)(\sum_{g=0}^{k-1} \binom{k}{g}R_1^gR_2^{k-g}),$ as $\sum_{i\geq 0} t_0 \partial_i t_i \partial_b \otimes 1 = (t_0 \partial_b \otimes 1)R_2.$ Then $X_2+X_1+X_4-X_3=(t_0\partial_b \otimes 1)(\sum_{g=0}^k \binom{k}{g}R_1^gR_2^{k-g}).$

Since $\sum_{i>0} f_m(0,i)(1 \otimes E_{ib}) = f_{m+1}(0,b)$,  $X_5=\sum_{m=1}^k (f_{m+1}(0,b) \sum_{g=0}^{k-m} \binom{k}{g} R_1^g R_2^{k-m-g})=\sum_{m=2}^{k+1} (f_m(0,b) \sum_{g=0}^{k+1-m} \binom{k}{g} R_1^g R_2^{k+1-m-g})$. Then $X_5+(X_2+X_1+X_4-X_3)= \\ \sum_{m=1}^{k+1} (f_m(0,b) \sum_{g=0}^{k+1-m} \binom{k}{g} R_1^g R_2^{k+1-m-g})$. We also have  $X_6=\sum_{m=1}^k ( f_m(0,b) \sum_{g=1}^{k+1-m} \binom{k}{g-1} R_1^g R_2^{k+1-m-g} ).$
Therefore, $\rho(r_{k+1}^{\mathfrak{gl}(n+1)}(0,b))=(X_5+(X_2+X_1+X_4-X_3))+X_6=\sum_{m=1}^k ( f_m(0,b) \sum_{g=1}^{k+1-m} \binom{k+1}{g} R_1^g R_2^{k+1-m-g} )$, which completes  the inductive step for the value of $\rho(r_{k+1}^{\mathfrak{gl}(n+1)}(0,b)).$

\bigskip

Finally, we consider the value of $\rho(r_k^{\mathfrak{gl}(n+1)}(0,0))$.
\begin{eqnarray*}
\rho(r_{k+1}^{\mathfrak{gl}(n+1)}(0,0)) &= & \rho(r_k^{\mathfrak{gl}(n+1)}(0,0)E_{00}+\sum_{i>0}r_k^{\mathfrak{gl}(n+1)}(0,i)E_{i0}) \\
& = & \rho(r_k^{\mathfrak{gl}(n+1)}(0,0))\rho(E_{00})+\sum_{i>0}\rho(r_k^{\mathfrak{gl}(n+1)}(0,i))\rho(E_{i0}) \\
& = &    \Biggl( \left( \sum_{g=0}^{k-1} \binom{k}{g} R_1^g R_2^{k-1-g} \right)\left( t_0 \partial_0 \otimes 1 \right) + R_1^k\\
& - &  \sum_{m=2}^k \left( \sum_{g=0}^{k-m} \binom{k}{g} R_1^g R_2^{k-m-g} \right)\left( \sum_{i,j>0} \partial_i t_j \otimes r_{m-1}^{\mathfrak{gl}(n)}(i,j) \right) \Biggr)(t_0 \partial_0 \otimes 1 + R_1) \\
& + & \sum_{i>0} \Biggl( \sum_{m=1}^k \left( f_m(0,i) \sum_{g=0}^{k-m} \binom{k}{g} R_1^g R_2^{k-m-g} \right) \Biggr)(t_i \partial_0 \otimes 1 - \sum_{j>0} \frac{t_j}{t_0} \otimes E_{ij})\\
& = &  \left( \sum_{g=0}^{k-1} \binom{k}{g} R_1^g R_2^{k-1-g} \right)\left( t_0 \partial_0 t_0 \partial_0 \otimes 1 \right)  + R_1^k(t_0\partial_0 \otimes 1)\\
& - & \sum_{m=2}^k \left( \sum_{g=0}^{k-m} \binom{k}{g} R_1^g R_2^{k-m-g} \right)\left( \sum_{i,j>0} \partial_i t_j \otimes r_{m-1}^{\mathfrak{gl}(n)}(i,j) \right)(t_0 \partial_0 \otimes 1) \\
& + &  \left( \sum_{g=0}^{k-1} \binom{k}{g} R_1^{g+1} R_2^{k-1-g} \right)\left( t_0 \partial_0 \otimes 1 \right)  + R_1^{k+1}\\
& - & \sum_{m=2}^k \left( \sum_{g=0}^{k-m} \binom{k}{g} R_1^{g+1} R_2^{k-m-g} \right)\left( \sum_{i,j>0} \partial_i t_j \otimes r_{m-1}^{\mathfrak{gl}(n)}(i,j) \right) \\
& + & \sum_{i>0} \Biggl( \sum_{m=1}^k \left( f_m(0,i) \sum_{g=0}^{k-m} \binom{k}{g} R_1^g R_2^{k-m-g} \right) \Biggr)(t_i \partial_0 \otimes 1) \\
& - & \sum_{i>0} \Biggl( \sum_{m=1}^k \left( f_m(0,i) \sum_{g=0}^{k-m} \binom{k}{g} R_1^g R_2^{k-m-g} \right) \Biggr)\left(\sum_{j>0} \frac{t_j}{t_0} \otimes E_{ij}\right). \\
\end{eqnarray*}

We write the last expression as $X_1 + X_2 - X_3 + X_4 + X_5 - X_6 + X_7 - X_8$.

Since $X_7 = \sum_{m=1}^k \left( \sum_{g=0}^{k-m} \binom{k}{g} R_1^g R_2^{k-m-g} \right)\left( \sum_{i,j>0} \partial_j t_i \otimes r_{m-1}^{\mathfrak{gl}(n)}(j,i) \right)(t_0 \partial_0 \otimes 1)$,  $X_7-X_3 = \left( \sum_{g=0}^{k-1} \binom{k}{g} R_1^g R_2^{k-1-g} \right)\left(\sum_{i>0} \partial_i t_i \otimes 1 \right)\left( t_0 \partial_0 \otimes 1 \right) = \left( \sum_{g=0}^{k-1} \binom{k}{g} R_1^g R_2^{k-1-g} \right)\left(R_2 - (t_0 \partial_0 \otimes 1) \right)\left( t_0 \partial_0 \otimes 1 \right)$. Then 
$X_7-X_3+X_1+X_2 = \left( \sum_{g=0}^{k} \binom{k}{g} R_1^g R_2^{k-g} \right)(t_0 \partial_0 \otimes 1)$. But since  $X_4= \\ \left(\sum_{g=1}^{k} \binom{k}{g-1} R_1^{g} R_2^{k-g}\right)(t_0 \partial_0 \otimes 1)$, $X_7-X_3+X_1+X_2+X_4 =  \left( \sum_{g=0}^{k} \binom{k+1}{g} R_1^g R_2^{k-g} \right)(t_0 \partial_0 \otimes 1)$.

We next have $X_8 = \sum_{m=1}^k \Biggl( \sum_{i,j>0} f_m(0,i) \left(\frac{t_j}{t_0} \otimes E_{ij}\right)\Biggr)\left(\sum_{g=0}^{k-m} \binom{k}{g} R_1^gR_2^{k-m-g}\right)$. For any positive integer $m$, $\sum_{i,j>0} f_m(0,i) \left(\frac{t_j}{t_0} \otimes E_{ij}\right) = \sum_{i,j>0} \left(\sum_{\ell>0} t_0 \partial_{\ell} \otimes r_{m-1}^{\mathfrak{gl}(n)}(\ell,i)\right) \left(\frac{t_j}{t_0} \otimes E_{ij}\right) = \sum_{j,\ell>0} \partial_{\ell}t_j \otimes \left(\sum_{i>0} r_{m-1}^{\mathfrak{gl}(n)}(\ell,i)E_{ij}\right) = \sum_{j,\ell>0} \partial_{\ell}t_j \otimes r_m^{\mathfrak{gl}(n)}(\ell,j)$. Then we have \\ $X_8 =   \sum_{m=2}^{k+1} \left(\sum_{g=0}^{k+1-m} \binom{k}{g} R_1^gR_2^{k+1-m-g}\right) \left(\sum_{i,j>0} \partial_it_j \otimes r_{m-1}^{\mathfrak{gl}(n)}(i,j)\right)$. Also, \\ $X_6= \sum_{m=2}^k \left( \sum_{g=1}^{k+1-m} \binom{k}{g-1} R_1^{g} R_2^{k+1-m-g} \right)\left( \sum_{i,j>0} \partial_i t_j \otimes r_{m-1}^{\mathfrak{gl}(n)}(i,j) \right)$, and therefore\\ $X_6+X_8= \sum_{m=2}^{k+1} \left(\sum_{g=0}^{k+1-m} \binom{k+1}{g} R_1^gR_2^{k+1-m-g}\right) \left(\sum_{i,j>0} \partial_it_j \otimes r_{m-1}^{\mathfrak{gl}(n)}(i,j)\right)$.

Combining the above,  $\rho(r_{k+1}^{\mathfrak{gl}(n+1)}(0,0)) = (X_7-X_3+X_1+X_2+X_4)+X_5-(X_6+X_8) = \left( \sum_{g=0}^{k} \binom{k+1}{g} R_1^g R_2^{k-g} \right)(t_0 \partial_0 \otimes 1) + R_1^{k+1} - \sum_{m=2}^{k+1} \left(\sum_{g=0}^{k+1-m} \binom{k+1}{g} R_1^gR_2^{k+1-m-g}\right) \left(\sum_{i,j>0} \partial_it_j \otimes r_{m-1}^{\mathfrak{gl}(n)}(i,j)\right).$ This completes the induction step for $\rho(r_{k+1}^{\mathfrak{gl}(n+1)}(0,0))$ and hence the proof of the theorem. \end{proof}

\end{document}